 \def\ps@pprintTitle{%
 	\let\@oddhead\@empty
 	\let\@evenhead\@empty
 	\def\@oddfoot{\footnotesize\itshape
 		{} \hfill\today}%
 	\let\@evenfoot\@oddfoot
 }
\newtheorem{theor}{Theorem}
\newtheorem*{theor*}{Theorem}
\newtheorem{prop}[theor]{Proposition}
\newtheorem{lemma}[theor]{Lemma}
\newtheorem{cor}[theor]{Corollary}
\newtheorem*{cor*}{Corollary}
\theoremstyle{definition}               %stile roman 
\newtheorem{defin}[theor]{Definition}
\newtheorem{ex}{Example}
\newtheorem{exs}[ex]{Examples}
\newtheorem{rem}{Remark}
\newtheorem{rems}[rem]{Remarks}
\newcommand{\seq}[1]{\left<#1\right>}
\DeclareMathOperator{\Sym}{Sym}
\DeclareMathOperator{\Aut}{Aut}
\DeclareMathOperator{\End}{End}
\DeclareMathOperator{\id}{id}
\DeclareMathOperator{\E}{E}
\newcommand{\lambdaa}[2]{\lambda_{#1}{#2}}
\newcommand{\rhoo}[2]{\rho_{#1}{#2}}
\newcommand{\alphaa}[3]{\alpha^{#1}_{#2}{#3}}
\begin{document}

\begin{frontmatter}
	\title{Set-theoretic solutions of the Yang-Baxter equation associated to weak braces\tnoteref{mytitlenote}}
	\tnotetext[mytitlenote]{This work was partially supported by the Dipartimento di Matematica e Fisica ``Ennio De Giorgi'' - Università del Salento. The second and the fourth authors were partially supported by the ACROSS project ARS01\_00702. The authors are members of GNSAGA (INdAM).}
	\author[unile]{F.~CATINO}
	\ead{francesco.catino@unisalento.it}
	%\cortext[c1]{Corresponding author}
		\author[unile]{M.~MAZZOTTA}
	\ead{marzia.mazzotta@unisalento.it}
	\author[unile]{M.M.~MICCOLI}
	\ead{maddalena.miccoli@unisalento.it}
	\author[unile]{P.~STEFANELLI}
	\ead{paola.stefanelli@unisalento.it}
	\address[unile]{Dipartimento di Matematica e Fisica ``Ennio De Giorgi''
		\\
		Universit\`{a} del Salento\\
		Via Provinciale Lecce-Arnesano \\
		73100 Lecce (Italy)\\}

\begin{abstract} 
    We investigate a new algebraic structure which always gives rise to a set-theoretic solution of the Yang-Baxter equation. Specifically, a \emph{weak (left) brace} is a non-empty set $S$ endowed with two binary operations $+$ and $\circ$ such that both $(S,+)$ and $(S, \circ)$ are inverse semigroups and 
    \begin{align*}
            a \circ \left(b+c\right) = \left(a\circ b\right) - a + \left(a\circ c\right)\qquad \text{and} \qquad 
        a\circ a^- = - a + a
    \end{align*}
    hold, for all $a,b,c \in S$, where $-a$ and $a^-$ are the inverses of $a$ with respect to $+$ and $\circ$, respectively. In particular, such structures include that of skew braces and form a subclass of inverse semi-braces.
    Any solution $r$ associated to an arbitrary weak brace $S$ has a behavior close to bijectivity, namely $r$ is a completely regular element in the full transformation semigroup on $S\times S$. In addition, we provide some methods to construct weak braces.
\end{abstract}
\begin{keyword}
 Quantum Yang-Baxter equation \sep Set-theoretic Solution \sep Inverse semigroup\sep Clifford semigroup \sep Skew brace  \sep Brace \sep Semi-brace 
\MSC[2020] 16T25\sep 81R50\sep 20M18
\end{keyword}

\end{frontmatter}

\section*{Introduction}
The quantum Yang-Baxter equation is a fundamental topic of theoretical physics that appeared at first in \cite{Ya67} and, independently, in \cite{Ba72}. Drinfel'd \cite{Dr92} posed the question of finding all the set-theoretical solutions of this equation. Determining all the solutions, up to equivalence, is  very challenging and a large number of works related to this matter of study has been produced  over the years. 
Among the seminal papers, we mention those of Gateva-Ivanova and Van den Bergh \cite{GaVa98}, Gateva-Ivanova and Majid \cite{GaMa08}, Etingof, Schedler, and Soloviev \cite{ESS99}, Lu, Yan, and Zhu
 \cite{LuYZ00}.
For more details on the development of the studies to date, we refer the reader to the introduction of \cite{CaMaSt21} and references therein. If $S$ is a set, a map  $r:S\times S \longrightarrow S\times S$ satisfying the relation
\begin{align*}
    \left(r \times \id_S\right) 
    \left(\id_S \times r\right)
    \left(r \times \id_S\right) 
    = \left(\id_S \times r\right)
    \left(r \times \id_S\right)
    \left(\id_S \times r\right)
\end{align*}
is said to be a \emph{set-theoretic solution of the Yang-Baxter equation}, or shortly a \emph{solution}, on $S$.  
If $S$ and $T$ are sets, two solutions $r$ and $s$ on $S$ and $T$, respectively, are called \emph{equivalent} if there exists a bijective map $f:S \to T$ such that $(f \times f)r = s(f \times f)$, see \cite{ESS99}.
Given a solution $r$, it is usual to write  $r(a,b) = (\lambda_a(b),\rho_b(a))$, with $\lambda_a$ and $\rho_b$ maps from $S$ into itself, for all $a,b \in S$. Moreover, $r$ is said to be \emph{left non-degenerate} if $\lambda_a$ is bijective, for every $a\in S$, \emph{right non-degenerate} if $\rho_b$ is bijective, for every $b\in S$, and \emph{non-degenerate} if $r$ is both left and right non-degenerate.
If $r$ is neither left nor right non-degenerate, then it is called \emph{degenerate}. Furthermore, $r$ is called \emph{involutive} if $r^2 = \id_{S\times S}$, \emph{idempotent} if $r^2 = r$, and \emph{cubic} if $r^3 = r$.

A productive research line for studying solutions drawn by Rump \cite{Ru07} is based on the study of the theory of \emph{(left) braces}, algebraic structures which include the Jacobson radical rings. Guarnieri and Vendramin \cite{GuVe17} introduced a generalization of braces, namely, \emph{skew (left) braces}, that are triples $(S,+,\circ)$, where $(S,+)$ and $(S,\circ)$ are groups and the following condition holds
\begin{equation*}
	a\circ (b+c) = \left(a\circ b\right) - a + \left(a\circ c\right),
\end{equation*}
for all $a,b,c\in S$.
In particular, if the group
$\left(S,+\right)$ is abelian then $S$ is a brace.\\
\textbf{Convention.} To avoid parentheses, from now on, we will assume that the multiplication has higher precedence than the addition.\\
As proved in \cite[Theorem 3.1]{GuVe17} every skew brace gives rise to a bijective non-degenerate solution. As shown in \cite[Theorem 9]{CaCoSt17}, such a solution $r:S\times S\to S\times S$ can be written as
\begin{align}\label{map}
    r\left(a,b\right)
    = \left(a\circ \left(a^-+b\right),\ \left(a^-+b\right)^-\circ  b\right), 
\end{align}
for all $a,b \in S$, where $a^-$ is the inverse of $a$ in the group $\left(S, \circ\right)$. Moreover, $r$ is involutive if and only if $S$ is a brace. In the last years, several results about skew braces have been provided in \cite{Ch18,CCoSt19,Ze19,JeKuVaVe19,CeSmVe19,KoTr20,GoNa21}, just to recall a few.

To determine new solutions, in \cite{CaMaSt21} there were introduced inverse semi-braces, structures including skew braces. A\emph{(left) inverse semi-brace} is a triple $(S,+, \circ)$  such that $(S, +)$ is an arbitrary semigroup, $(S, \circ)$ is an inverse semigroup, and
\begin{align*}
  a\circ\left(b + c\right)
  = a\circ b + a\circ\left(a^- + c\right)
\end{align*}
holds, for all $a,b,c\in S$, where $a^-$ is the inverse of $a$ with respect to the multiplication. We recall that a semigroup $(S, \circ)$ is called \emph{inverse} if, for each $a \in S$, there exists a unique $a^- \in S$ satisfying $a\circ a^-\circ a=a$ and $a^-\circ a\circ a^-=a^-$. 
The books of Clifford and Preston \cite{ClPr61}, Petrich \cite{Pe84}, Howie \cite{Ho95}, and Lawson \cite{La98} contain the most important known results on the wide theory of inverse semigroups.\\
Inverse semi-braces $S$ with $(S,\circ)$ group are the semi-braces, structures studied in \cite{JeAr19}. They were initially introduced in  \cite{CaCoSt17}  in the cancellative case, namely when $(S,+)$ is a cancellative semigroup, and
recently deepened in \cite{CaCeSt20x}. Moreover, inverse semi-braces with $(S,\circ)$ a Clifford semigroup, i.e., an inverse semigroup having central idempotents, are the generalized semi-braces \cite{CCoSt20x-2}.\\ 
Note that if $S$ is an inverse semi-brace, the map $r$ given in \eqref{map}
is not necessarily a solution. Sufficient conditions so that it is can be found in \cite[Theorem 7]{CaMaSt21}.
Furthermore, in the context of semi-braces, a characterization has been given in \cite[Theorem 3]{CaCoSt19}. In particular, if $(S,+)$ is a left cancellative semigroup, the map $r$ is a left non-degenerate solution  \cite[Theorem 9]{CaCoSt17}.  

\smallskip

This paper aims to investigate a subclass of inverse semi-braces $S$ in which the map $r$ always is a solution. Specifically, a non-empty set $S$ endowed with two operations $+$ and $\circ$ is said to be a \emph{weak (left) semi-brace} if $(S,+)$ and $(S, \circ)$  both are inverse semigroups satisfying
\begin{align}\label{orig}
a\circ\left(b + c\right)
  = a\circ b + a\circ\left(a^- + c\right)\qquad \text{and}\qquad  a \circ \left(a^-+b\right) = -a + a\circ b,
    \end{align}
for all $a,b,c \in S$, where $-a$ denotes the inverse of $a$ with respect to the sum. We show that the conditions in \eqref{orig} are equivalent to
\begin{align}
a \circ \left(b+c\right)=a\circ b-a+a\circ c\qquad \text{and}\qquad  a \circ a^- = -a + a,
    \end{align}
for all $a,b,c \in S$, which make these structures very close to skew braces.\\
We highlight that the additive semigroup $(S,+)$ and the multiplicative semigroup $(S,\circ )$ have the same set of idempotents and besides $(S,+)$ is a Clifford semigroup. Fully exploiting the inverse semigroup on the additive structure, we recover most of the properties already known for skew braces.
Among these, the map $\lambda:S \to \End(S,+), \, a \mapsto \lambda_a$ is a homomorphism from $(S, \circ)$ into $\End(S,+)$ and the map $\rho:S \to \mathcal{T}_{S}, \ b \to \rho_b$ is an anti-homomorphism from $(S, \circ)$ into the  full transformation semigroup $\mathcal{T}_{S}$. In addition, the following relation holds
\begin{align*}
    a \circ b= a \circ \left(a^-+b \right)\circ \left(a^-+b \right)^-\circ b,
\end{align*}
for all $a,b \in S$, trivially satisfied in the case of $(S, \circ)$ a group, and which is crucial to prove our main result.\\
As mentioned before, the solution $r$ associated to a skew brace $S$ is bijective. In particular, $r^{-1}$ is the solution associated to the opposite skew brace $S^{op}$ defined by considering the opposite sum of $S$, see \cite[Proposition 3.1]{KoTr20}. Moreover, also maps $\lambda_a$ and $\rho_b$ are bijective. Any weak brace $S$ has an analogous behavior. Indeed, the maps $\lambda_a$ and $\rho_b$ admit an inverse in $\mathcal{T}_{S}$ given by $\lambda_{a^-}$ and $\rho_{b^-}$, respectively. Moreover, $r$ has a behavior near to bijectivity, since it is a completely  regular element in $\mathcal{T}_{S\times S}$, namely, it admits the map $r^{op}$ as an inverse, that is the solution associated to the opposite weak brace of $S$, and it holds $rr^{op} = r^{op}r$. 
\smallskip

Finally, we review some of the constructions of inverse semi-braces provided in \cite{CaMaSt21} to obtain several instances of weak braces. Moreover, classes of examples can be obtained starting from exactly factorizable Clifford monoids. For a fuller treatment we refer the reader to the survey \cite{Fi10} that contains methods for constructing factorizable inverse monoids. 
Specifically, our examples include those of skew braces provided by Guarnieri and Vendramin in \cite[Example 1.6]{GuVe17} and also by Smoktunowicz and Vendramin in \cite[Theorem 3.3]{SmVe18}, which were mainly motivated by the result of Weinstein and Xu in \cite[Theorem 9.2]{WeXu92}. In addition, we review some of the constructions of inverse semi-braces provided in \cite{CaMaSt21}, to produce other new examples.

\bigskip

%**********************
\section{Basics on inverse semi-braces}
%**********************
In this section, we recall some basics on  inverse semi-braces and we give some new properties related to them. 
\medskip

For the ease of the reader, we initially recall essential notions for
our treatment which one can find, for instance, in \cite{Ho95} and \cite{Pe84}. Given a semigroup $(S, \circ)$ and $a \in S$, we say that an element $x$ of $S$ is an \emph{inverse} of $a$ if $a\circ x\circ a=a$ and $x\circ a\circ x=x$ hold. The semigroup $(S,\circ)$ is called \emph{inverse} if, for each $a\in S$, there exists a unique inverse of $a$, which we denote by $a^-$. Clearly, every group is an inverse semigroup.  A fundamental instance of inverse semigroup is the set $\mathcal{I}_X$ consisting of all the partial one-to-one maps of a non-empty set $X$ under the standard operation $\circ$ of composition of relations
(see \cite[Theorem  5.1.5]{Ho95}).\\
The behavior  of inverse elements in an inverse semigroup $(S,\circ)$ is similar to that in a group, since $(a\circ b)^{-}=b^{-}\circ a^{-}$ and $(a^{-})^{-}=a$, for all $a,b \in S$. If $a \in S$, then $a\circ a^{-}$ and $a^{-}\circ a$ are idempotents of $S$. Moreover, the set $\E(S,\circ)$ of the idempotents  is a commutative subsemigroup of $S$ and $e=e^-$, for every $e \in \E(S,\circ)$.\\ 
In addition, inverse semigroups in which its idempotents  are central is called a \emph{Clifford semigroups}. Such semigroups belong to the class of \emph{completely regular semigroups}, namely semigroups $(S,\circ)$ for which, for any $a\in S$, there exists a unique inverse $a^{-1}$ of $a$ such that $a\circ a^{-1}= a^{-1}\circ a$. Note that in a Cifford semigroup $a^{-}=a^{-1}$, for every $a\in S$.

\medskip
\begin{defin}[Definition 3, \cite{CaMaSt21}]\label{def_inverse}
    Let $S$ be a non-empty set endowed with two operations $+$ and $\circ$ such that $\left(S,+\right)$ is a semigroup (not necessarily commutative) and $\left(S,\circ\right)$ is an inverse semigroup. Then, we say that $\left(S, + , \circ \right)$ is \emph{a (left) inverse semi-brace} if the following holds
    \begin{align*}
        a\circ \left(b+c\right) = a\circ b + a\circ \left(a^{-} + c\right),
    \end{align*}
    for all $a, b, c \in S$, where $a^-$ is the inverse of $a$ with respect to $\circ$. 
    We call $(S,+)$ and $(S,\circ )$ the \emph{additive semigroup} and the \emph{multiplicative semigroup} of $S$, respectively.
\end{defin}
  
\noindent Any semi-brace \cite{CaCoSt17,JeAr19} is an inverse semi-brace, since in this case $\left(S,\circ\right)$ is a group.
Other examples of inverse semi-braces are generalized semi-braces \cite{CCoSt20x-2},  with $\left(S,\circ\right)$ a Clifford semigroup. By the way, we recall that a \emph{generalized (left) semi-brace} is a structure $(S,+,\circ)$ 
such that $(S,+)$ is a semigroup, $(S,\circ)$ is a completely regular semigroup, and it holds $a\circ \left(b+c\right) = a\circ b + a\circ \left(a^{-1} + c\right)$, for all $a,b,c\in S$.\\
Any skew brace \cite{GuVe17} and, in particular any brace \cite{Ru07}, is an inverse semi-brace, too. Indeed, $- a + a\circ b = a\circ a^- - a + a\circ b = a\circ \left(a^- + b\right)$, for all $a, b\in S$. 

\medskip

Given an inverse semi-brace $S$, the map $r:S\times S\to S\times S$ given by
\begin{align*}
    r(a,b) = \left(a\circ \left(a^{-} + b\right), \; \left(a^{-} + b\right)^-\circ b\right), 
\end{align*}
for all $a,b \in S$, is called the \emph{map associated} to $S$. Sufficient conditions so that the map $r$ is a solution have been provided in \cite[Theorem 7]{CaMaSt21}. For the class of semi-braces, one can find a characterization in \cite[Theorem 3]{CaCoSt19}. In particular, if $(S,+)$ is a left cancellative semigroup, the map $r$ is a left non-degenerate solution, see \cite[Theorem 9]{CaCoSt17}. In the more specific case of $S$ a skew brace, $r$ is a solution which is bijective and non-degenerate, as proved in \cite[Theorem 3.1]{GuVe17}.

\medskip

We point out that, every non-commutative Clifford semigroup gives rise to two inverse semi-braces that produce two not equivalent solutions.
\begin{ex}\cite{CaMaSt21}\label{exs-1}
Let $(S, \circ)$ be an arbitrary Clifford semigroup and let us consider the \emph{trivial inverse semi-brace} $(S,+,\circ)$, with $a+b = a \circ b$, and the \emph{almost trivial inverse semi-brace} $\left(S, \tilde{+},\circ\right)$, with $a \ \tilde{+}\ b = b \circ a$, for all $a,b \in S$.
Then, the solutions $r$ and $s$ associated to $(S,+,\circ)$ and $\left(S, \tilde{+},\circ\right)$ are
\begin{align*}
 r\left(a , b\right)=\left(a \circ a^-\circ b, \ b^-\circ a \circ b\right)   \qquad s\left(a , b\right)=\left(a \circ b\circ a^-, \ a \circ  b^-\circ b\right),
\end{align*}
for all $a, b \in S$, respectively. Moreover, if $(S, \circ)$ is commutative, the maps $r$ and $s$ coincide and $r$ is cubic.
\end{ex}
\noindent Note that the previous examples include \cite[Example 1.3]{GuVe17}, given in the context of skew braces. In addition, if the group $\left(S,\circ\right)$ is not abelian, $r=s^{-1}$, thus such solutions are not equivalent.

\medskip

As usual, if $S$ is an arbitrary inverse semi-brace, let $\lambda: S\to \End(S,+), \,a\mapsto\lambda_a$ and $\rho: S\to \mathcal{T}_S, \,b\mapsto\rho_b$ be the maps defined by 
\begin{align*}
   \lambda_a\left(b\right) = a\circ \left(a^{-} + b\right) \qquad \rho_b\left(a\right) = \left(a^{-} + b\right)^{-}\circ b, 
\end{align*}
for all $a,b\in S$, respectively.

\medskip

\noindent
The following proposition contains some properties which essentially involve the map $\lambda_a$ and the idempotents of the multiplicative semigroup of any inverse semi-brace. 
\begin{prop}\label{prop_semi_inversi}
  Let $S$ be an inverse semi-brace. Then, the following assertions hold:
  \begin{enumerate}    
    \item $\lambda_{a \circ a^-}=\lambda_{a}\lambda_{a^-}$,
    \item $\lambda_{a \circ b \circ b^-}=\lambda_{a \circ b}\lambda_{b^-}$,
    \item $a \circ \lambda_{a^-}(b)=a+\lambda_{a\circ a^-}(b)$,
          \item $a+\lambda_a(b)=a \circ \left(a^-\circ a+b\right),$
  \end{enumerate}
  for all $a, b \in S$.
  \begin{proof}
  $1.$\ Let $a,b\in S$. Then, we get
\begin{align*}
    \lambda_{a\circ a^-}\left(b\right)
    &= a\circ a^-\circ\left(a\circ a^- + b\right)
    = a\circ \left(a^-\circ a\circ a^- + \lambda_{a^-}\left(b\right)\right)\\
    &= a\circ \left(a^- + \lambda_{a^-}\left(b\right)\right)
    = \lambda_{a}\lambda_{a^-}\left(b\right).
\end{align*}
$2.$  \ If $a,b,c\in S$, we obtain
\begin{align*}
    \lambda_{a \circ b \circ b^-}\left(c\right)
    = a\circ b \circ b^-\circ \left(b\circ b^- \circ a^-+c\right)=a \circ b \circ \left(b^- \circ a^- +\lambda_{b^-}\left(c\right)\right)
    = \lambda_{a \circ b}\lambda_{b^-}\left(c\right).
\end{align*}
$3.$ \ If $a,b\in S$, we have that \begin{align*}
a\circ\lambda_{a^-}(b)
= a\circ a^-\circ\left(a + b\right)
= a + \lambda_{a\circ a^-}\left(b\right).
\end{align*}
$4.$ \ If $a,b \in S$, it follows that
\begin{align*}
    a + \lambda_a(b)
    = a\circ a^-\circ a + a\circ\left(a^- + b\right)
    = a\circ\left(a^-\circ a + b\right),
\end{align*}
which completes the claim.
  \end{proof}
\end{prop}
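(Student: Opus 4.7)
The plan is to prove all four identities by direct computation, using only three ingredients: the defining distributive law $a\circ(b+c)=a\circ b+a\circ(a^-+c)$, the definition $\lambda_a(b)=a\circ(a^-+b)$, and the standard inverse-semigroup identities $(a\circ a^-)^-=a\circ a^-$, $a^-\circ a\circ a^-=a^-$, and $a\circ a^-\circ a=a$. No property specific to the additive structure (beyond being a semigroup) is needed.

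For (1), I would unfold the left-hand side as $\lambda_{a\circ a^-}(b)=a\circ a^-\circ(a\circ a^- + b)$, using that $a\circ a^-$ is its own $\circ$-inverse. Applying the distributive law to the inner product $a^-\circ(a\circ a^- + b)$, with $a^-$ playing the role of the left factor and noting $(a^-)^-=a$, yields $a^-\circ a\circ a^- + a^-\circ(a+b)=a^-+\lambda_{a^-}(b)$ after collapsing $a^-\circ a\circ a^-$ to $a^-$. Multiplying on the left by $a$ gives $a\circ(a^-+\lambda_{a^-}(b))=\lambda_a\lambda_{a^-}(b)$, which is the claim. For (2), the same pattern works: write $\lambda_{a\circ b\circ b^-}(c)=a\circ b\circ b^-\circ(b\circ b^-\circ a^-+c)$, distribute $b^-$ through the bracket, use $b^-\circ b\circ b^-=b^-$, and recognize the result as $a\circ b\circ(b^-\circ a^-+\lambda_{b^-}(c))=\lambda_{a\circ b}\lambda_{b^-}(c)$.

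For (3), I would start from the right by rewriting $a\circ\lambda_{a^-}(b)=(a\circ a^-)\circ(a+b)$ since $(a^-)^-=a$, and apply the distributive law to the idempotent $a\circ a^-$: one gets $(a\circ a^-)\circ a + (a\circ a^-)\circ(a\circ a^- + b)$, which collapses to $a+\lambda_{a\circ a^-}(b)$ via $(a\circ a^-)\circ a=a$. For (4), I would run the distributive law backwards: setting $b_1=a^-\circ a$, we have $a\circ(a^-\circ a+b)=a\circ(a^-\circ a)+a\circ(a^-+b)=a+\lambda_a(b)$ using $a\circ a^-\circ a=a$.

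The manipulations are short, so there is no significant obstacle; the only thing to track carefully is which of the two idempotent-collapse identities ($a\circ a^-\circ a=a$ versus $a^-\circ a\circ a^-=a^-$) is needed at each step, since each identity requires exactly one collapse to follow distributivity.
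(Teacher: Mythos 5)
Your proof is correct and follows essentially the same route as the paper's: each item is obtained by unfolding $\lambda$, applying the defining distributive law with the appropriate left factor ($a^-$, $b^-$, $a\circ a^-$, or $a$), and collapsing via the inverse-semigroup identities, exactly as in the paper. No gaps.
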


\medskip

\noindent As a direct consequence, we get the following result.
\begin{cor}\label{lambdaalambaa-}
Let $S$ be an inverse semi-brace. Then, the following equalities
\begin{align*}
    \lambda_{a}\lambda_{a^-}\lambda_{a} = \lambda_{a}
    \qquad \text{and} \qquad
    \lambda_{a^-}\lambda_{a}\lambda_{a^-} = \lambda_{a^-}
\end{align*}
hold, for any $a\in S$.
\end{cor}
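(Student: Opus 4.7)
The plan is to apply parts (1) and (2) of the preceding Proposition together with the inverse-semigroup identities $(a^-)^- = a$ and $a \circ a^- \circ a = a$.

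First, I would take part (2) of the Proposition with $b = a^-$. Since $(a^-)^- = a$ and $a \circ a^- \circ a = a$, the left-hand side $\lambda_{a \circ b \circ b^-}$ becomes simply $\lambda_a$, while the right-hand side becomes $\lambda_{a \circ a^-} \lambda_a$. This gives the intermediate identity
\begin{align*}
\lambda_a = \lambda_{a \circ a^-}\, \lambda_a.
\end{align*}
Next, I would substitute part (1) of the Proposition, namely $\lambda_{a \circ a^-} = \lambda_a \lambda_{a^-}$, into the right-hand side to obtain
\begin{align*}
\lambda_a = \lambda_a\, \lambda_{a^-}\, \lambda_a,
\end{align*}
which is the first identity of the corollary.

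For the second identity, I would simply apply the first identity already established with $a$ replaced by $a^-$. Using that $(a^-)^- = a$, the statement $\lambda_{a^-} \lambda_{(a^-)^-} \lambda_{a^-} = \lambda_{a^-}$ becomes $\lambda_{a^-} \lambda_a \lambda_{a^-} = \lambda_{a^-}$, as desired.

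There is no real obstacle here: the whole argument is a direct assembly of parts (1) and (2) of the Proposition, once one observes that the key inverse-semigroup equation $a \circ a^- \circ a = a$ makes the subscript on the left of (2) collapse to $a$ when $b = a^-$. The symmetry between $a$ and $a^-$ afforded by the involution $(a^-)^- = a$ then immediately yields the second identity from the first.
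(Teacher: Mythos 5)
Your proposal is correct and follows exactly the route the paper intends: the corollary is stated as a direct consequence of the preceding proposition, and combining part (2) with $b=a^-$ (so that $a\circ a^-\circ a=a$ collapses the subscript) with part (1) is precisely that derivation, with the second identity following by the substitution $a\mapsto a^-$.
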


\smallskip

\noindent In other words,  for every $a \in S$, the map $\lambda_a$ admits the map $\lambda_{a^{-}}$ as an inverse in $\mathcal{T}_S$. 

\medskip

Finally, we note that in the case of $(S, \circ)$ a group, it trivially holds the equality $a \circ b=\lambda_a(b) \circ \rho_b(a)$, for all $a,b \in S$. More generally, arbitrary inverse semi-braces satisfy the following weaker properties. 
\begin{prop}\label{prop_inversi}
  Let $S$ be an inverse semi-brace. Then, the following statements hold:
\begin{enumerate}
    \item $\lambda_a(b)^-\circ \lambda_a(b) \circ \rho_b(a)=\lambda_a(b)^- \circ a \circ b$,
          \item $ \lambda_a(b) \circ \rho_b(a)\circ \rho_b(a)^-= a \circ b\circ \rho_b(a)^-$,
\end{enumerate}
 for all $a, b \in S$.     
 \begin{proof}
We only prove $1.$ and, in the same way, one can check the second equality. If $a,b \in S$, then
\begin{align*}
    \lambda_a(b)^-\circ \lambda_a(b) \circ \rho_b(a)&=\left(a^-+b\right)^-\circ a^- \circ a \circ \left(a^-+b\right) \circ \left(a^-+b\right)^-\circ b\\     &=\left(a^-+b\right)^-\circ \left(a^-+b\right) \circ \left(a^-+b\right)^-\circ a^- \circ a \circ b\\
     &=\left(a^-+b\right)^-\circ a^- \circ a \circ b=\lambda_a(b)^- \circ a \circ b.
\end{align*}
Therefore, the assertion is proved.
 \end{proof}
\end{prop}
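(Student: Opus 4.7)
The plan is to prove (1) by a direct expansion in the multiplicative inverse semigroup $(S,\circ)$, exploiting only the definitions of $\lambda_a(b)$ and $\rho_b(a)$ together with the fact that idempotents of an inverse semigroup form a commutative subsemigroup. First I would note that, since $\lambda_a(b)=a\circ(a^-+b)$, the inverse in $(S,\circ)$ is $\lambda_a(b)^-=(a^-+b)^-\circ a^-$. Substituting this together with $\rho_b(a)=(a^-+b)^-\circ b$ into the left-hand side of (1) turns it into
\begin{align*}
(a^-+b)^-\circ a^-\circ a\circ(a^-+b)\circ(a^-+b)^-\circ b.
\end{align*}

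The key observation is then that $a^-\circ a$ and $(a^-+b)\circ(a^-+b)^-$ are both idempotents of the inverse semigroup $(S,\circ)$; hence they commute. Moving them past each other and collapsing $(a^-+b)^-\circ(a^-+b)\circ(a^-+b)^-=(a^-+b)^-$ by the defining property of the inverse yields
\begin{align*}
(a^-+b)^-\circ a^-\circ a\circ b \;=\;\lambda_a(b)^-\circ a\circ b,
\end{align*}
which is precisely the right-hand side of (1).

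Assertion (2) follows by a completely symmetric argument: expand the left-hand side using $\rho_b(a)^-=b^-\circ(a^-+b)$, then swap the idempotents $(a^-+b)\circ(a^-+b)^-$ and $b\circ b^-$ (both in $\E(S,\circ)$), and finally collapse the resulting $(a^-+b)\circ(a^-+b)^-\circ(a^-+b)$ to $(a^-+b)$. I do not expect any real obstacle here, since neither the inverse semi-brace identity nor any property of $(S,+)$ is needed beyond the definitions of $\lambda_a$ and $\rho_b$; the only mild subtlety to verify carefully is the correct form of the inverses $\lambda_a(b)^-$ and $\rho_b(a)^-$ in $(S,\circ)$ and the application of the standard rule $(x\circ y)^-=y^-\circ x^-$ in inverse semigroups.
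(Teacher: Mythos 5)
Your proposal is correct and follows essentially the same route as the paper: expand $\lambda_a(b)^-\circ\lambda_a(b)\circ\rho_b(a)$ via the definitions, commute the idempotents $a^-\circ a$ and $(a^-+b)\circ(a^-+b)^-$ of $(S,\circ)$, and collapse $(a^-+b)^-\circ(a^-+b)\circ(a^-+b)^-$ to $(a^-+b)^-$. The paper likewise proves only assertion 1 and leaves 2 to the symmetric argument you sketch, so there is nothing to add.
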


\bigskip

%*****************************************************
\section{Weak braces}
%*****************************************************

In this section, we introduce the algebraic structure of weak brace which generalizes that of skew brace and forms a subclass of inverse  semi-braces. Examples and several structural properties are given. 
\medskip

From now on, if $\left(S,+\right)$ and $\left(S,\circ\right)$ are inverse semigroups, for any $a \in S$, we denote by $a^-$ and $-a$ the inverses of $a$ with respect to $\circ$ and $+$, respectively. 
\begin{defin}\label{def:skew-inverse}
    Let $S$ be a non-empty set endowed with two operations $+$ and $\circ$ such that $\left(S,+\right)$ and $\left(S,\circ\right)$ both are inverse semigroups. Then, $(S, +, \circ)$ is a \emph{weak (left) brace} if the following relations hold
 \begin{align*}
    a\circ\left(b + c\right)
    = a\circ b - a + a\circ c\qquad \text{and}\qquad a\circ a^-
    = - a + a,
 \end{align*}
for all $a,b,c\in S$. We call $(S,+)$ and $(S,\circ )$ the \emph{additive semigroup} and the \emph{multiplicative semigroup} of $S$, respectively.
\end{defin}
\noindent Any weak brace $S$ is an inverse semi-brace. Indeed, we have
\begin{align*}
    -a + a\circ b
  = -a + a - a + a\circ b
  = a\circ a^- -a+ a\circ b
  = a\circ\left(a^- + b\right),
\end{align*}
for all $a,b\in S$. 
\noindent The trivial and the almost trivial inverse semi-braces in \cref{exs-1} are easy instances of weak braces obtained starting from a Clifford semigroup.
On the other hand, not every inverse semi-brace is a weak brace, see for instance \cite[Example 1]{CaMaSt21}.

\medskip

Clearly, the sets of the idempotents $E(S,+)$ and $E(S,\circ)$ coincide. As a direct consequence, since inverse semigroups with exactly one idempotent are groups (cf. Proposition 4 of Section 1.4 in \cite{La98}), it holds that $(S,+)$ is a group if and only if $(S,\circ)$ is a group.
Furthermore, $(S, +)$ is an idempotent semigroup if and only if $(S, \circ)$ is an idempotent semigroup.
\begin{prop}\label{idempotent}
    Let $S$ be a weak brace. Then, $(S, +)$ is an inverse monoid with unit $0$ if and only if $(S,\circ)$ is an inverse monoid  with unit $0$.
\begin{proof}
Initially, we assume that $(S,+)$ is a monoid with unit $0$. If $a \in S$, we have
\begin{align*}
    a\circ 0-a+a\circ 0=a \circ \left(0+0\right)=a\circ 0
    \end{align*}
and
\begin{align*}
    -a+a\circ 0-a=a\circ\left(a^-+0\right)-a=a\circ a^--a=-a+a-a=-a,
\end{align*}
thus $a \circ 0=-(-a)=a$. Moreover, since $0\in E(S, \circ)$, from what has been just proved, we get
\begin{align*}
    a=a \circ a^-\circ a=a \circ a^-\circ 0\circ a=0 \circ a \circ a^-\circ a=0\circ a,
\end{align*}
because $0$ and $a\circ a^-$ commute. Thus, $(S, \circ)$ is a monoid with unit $0$.\\ 
Now, we suppose that $(S, \circ)$
is a monoid with unit $0$. Since, $0 \in E(S,+)$, we have $-0=0$. If $a \in S$, we obtain
\begin{align*}
    a=0\circ(a-a+a)=0 \circ a -0+ 0\circ\left(-a+a\right)= a+0 -a+a=a+0,
\end{align*}
because $0$ and $-a+a$ commute. On the other hand, we have
\begin{align*}
    a=a-a+a = a-a+0+a
    =0+a-a+a
    =0+a.
\end{align*}
Therefore, $(S,+)$ is a monoid with unit $0$.
\end{proof}
\end{prop}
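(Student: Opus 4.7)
The plan is to prove both implications by showing that $0$ is a two-sided identity for the other operation. Throughout I will use two standard features of inverse semigroups: uniqueness of inverses, and the fact that the idempotents form a commutative subsemigroup (so any two idempotents commute). It is also useful that the paper has already observed $E(S,+) = E(S,\circ)$, so any additive idempotent is automatically a multiplicative one, and conversely.

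For the forward direction, I assume $(S,+)$ is a monoid with unit $0$. My strategy is to exhibit $y := a \circ 0$ as the additive inverse of $-a$, so that uniqueness of inverses in $(S,+)$ forces $y = -(-a) = a$. To that end, applying the first weak brace axiom with $b = c = 0$ yields $a \circ 0 = a \circ 0 - a + a \circ 0$, i.e.\ $y - a + y = y$. Next, using the inverse semi-brace identity $-a + a \circ c = a \circ (a^- + c)$ (derived at the start of Section 2 from the weak brace axioms) together with $a \circ a^- = -a + a$, I get $-a + a \circ 0 - a = a \circ (a^- + 0) - a = a \circ a^- - a = -a + a - a = -a$. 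These are precisely the two defining equations for an inverse of $-a$, hence by uniqueness $a \circ 0 = a$. For $0 \circ a = a$, I use that $0 \in E(S,+) = E(S,\circ)$ is also a multiplicative idempotent and therefore commutes with $a \circ a^-$ in $(S,\circ)$. Writing $a = a \circ a^- \circ a$ and inserting $0$ via this commutativity, combined with the already established $a^- \circ 0 = a^-$, yields $0 \circ a = a$.

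For the reverse direction, I assume $(S,\circ)$ is a monoid with unit $0$. Since $0 \in E(S,\circ) = E(S,+)$, one immediately has $-0 = 0$. The plan is to rewrite $a$ as $0 \circ (a - a + a)$, expand via the first weak brace axiom to obtain $a + 0 - a + a$, and then exploit the commutativity of the two additive idempotents $0$ and $-a + a$, together with $a + (-a + a) = a$, to collapse the expression to $a + 0$. A symmetric manipulation starting from $a = a - a + a = a - a + 0 + a$ and commuting $0$ past $-a$ produces $0 + a = a$, completing the argument.

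The main obstacle is spotting the trick for the forward direction: recognizing that the weak brace axioms force $a \circ 0$ to simultaneously satisfy both defining relations of the inverse of $-a$ in $(S,+)$. Once this observation is in place, uniqueness of inverses in an inverse semigroup closes that half of the proof immediately, and everything else reduces to routine rearrangements permitted by commutativity of idempotents.
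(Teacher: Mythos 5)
Your proposal is correct and follows essentially the same route as the paper's proof: in the forward direction you show $a\circ 0$ satisfies the two defining equations of the inverse of $-a$ in $(S,+)$ and invoke uniqueness of inverses, then obtain $0\circ a=a$ by commuting the idempotents $0$ and $a\circ a^-$; in the reverse direction you expand $0\circ(a-a+a)$ and use commutativity of additive idempotents, exactly as the paper does. The only nitpick is your phrase ``commuting $0$ past $-a$'' in the last step --- what is actually needed (and what the paper uses) is that $0$ commutes with the \emph{idempotent} $a-a$, not with $-a$ itself.
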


\medskip

The additive structure of an arbitrary weak brace $S$ is a Clifford semigroup, as we will prove later. Thus, the fact that the trivial and the almost trivial weak braces are defined by means of a Clifford semigroup is essential. 
To show this result on $(S,+)$, we need to give some structural properties, many of which are already known for skew braces, and that we will also use throughout the paper.

\medskip

The next lemma includes a result contained in  \cite[Lemma 1.7]{GuVe17} and establishes a link between the addition and the multiplication of any weak brace.
\begin{lemma}\label{le:prop-meno}
Let $S$ be a weak brace. Then, the following statements hold:
  \begin{enumerate}
      \item $a \circ (-b) = a - a \circ b + a$,
     \item $a\circ b= a+\lambda_a(b)$,
  \end{enumerate}
  for all $a,b \in S$.
  \begin{proof}
 $1.$ \  If $a, b \in S$, at first we check that 
  $a\circ \left(-b\right) = -\left(- a +a\circ b - a\right)$. Indeed,
  \begin{align*}
      &a \circ (-b) +\left(- a +a \circ b-a \right)+ a \circ (-b)\\
      &= a \circ (-b+b)-a+ a \circ (-b)=a\circ(-b+b-b)=a \circ (-b)
  \end{align*}
  and
  \begin{align*}
     &\left(- a +a \circ b-a \right)+a \circ (-b)+\left(- a +a \circ b-a \right)\\
     &=-a + a \circ \left(b-b\right)-a+a\circ b-a
     = -a+a\circ (b-b+b)-a=-a+a\circ b-a,
  \end{align*}
hence we get the claim. \\
$2.$ \ If $a,b\in S$, by $1.$, we have that
\begin{align*}
   a\circ b= a \circ \left(-\left(-b\right)\right)=a-a\circ(-b)+a=a-a+a\circ b-a+a.
   \end{align*}
   Thus, 
   \begin{align*}
    a\circ b=a-a+(a-a+a\circ b-a+a)=a-a+a \circ b= a+\lambda_a(b),
\end{align*}
which is our claim.
  \end{proof}
\end{lemma}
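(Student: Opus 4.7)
My plan is to exploit the uniqueness of inverses in the inverse semigroup $(S,+)$: to identify the element $a \circ (-b)$, it suffices to exhibit some $y \in S$ for which $a\circ(-b)$ and $y$ form an inverse pair, i.e.\ satisfy the two equations $a\circ(-b) + y + a\circ(-b) = a\circ(-b)$ and $y + a\circ(-b) + y = y$. Guided by the target formula and by the anti-morphism property $-(x+y+z) = -z - y - x$ for inverses in $(S,+)$, I would take $y = -a + a\circ b - a$; then $-y = a - a\circ b + a$, which is precisely what part~$1$ claims.

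To verify the two defining identities for this choice of $y$, I would use only two ingredients: the distributive law $a\circ(u+v) = a\circ u - a + a\circ v$, applied both forwards and backwards to split and fuse sums; and the fundamental inverse-semigroup identity $x + (-x) + x = x$ in $(S,+)$. For instance, the triple sum $a\circ(-b) + (-a + a\circ b - a) + a\circ(-b)$ regroups by associativity as $a\circ(-b) - a + \bigl(a\circ b - a + a\circ(-b)\bigr)$; the bracketed factor equals $a\circ(b + (-b))$ by reverse-distributivity, and a second reverse-distributivity collapses the whole expression to $a\circ(-b + b - b) = a\circ(-b)$. The second identity is symmetric and uses exactly the same two tools.

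For part~$2$, the natural move is to apply part~$1$ with $b$ replaced by $-b$, obtaining $a\circ b = a - a\circ(-b) + a$; substituting the formula from part~$1$ and using the anti-morphism property of $-$ yields the sandwich identity $a\circ b = (a-a) + a\circ b + (-a + a)$. Independently, applying distributivity together with the defining relation $a\circ a^- = -a + a$ gives $\lambda_a(b) = a\circ(a^- + b) = a\circ a^- - a + a\circ b = -a + a\circ b$, hence $a + \lambda_a(b) = (a-a) + a\circ b$. To reconcile the two expressions, I would substitute the sandwich identity for the inner $a\circ b$ on the right-hand side of $(a-a) + a\circ b$ and use idempotency of $a-a$ in $(S,+)$ to collapse the doubled prefix; the remaining trailing $(-a+a)$ rebuilds the sandwich and returns $a\circ b$.

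The main obstacle I anticipate is the careful handling of the one-sided idempotents $a - a$ and $-a + a$: unlike in a skew brace, where both collapse to zero, here they are in general distinct idempotents of $(S,+)$ and cannot be freely cancelled or identified. The device that keeps the bookkeeping honest is to introduce them only through the relation $a\circ a^- = -a + a$ and to eliminate them using idempotency of the semilattice $E(S,+)$ together with $x + (-x) + x = x$, taking care never to commute $a-a$ past $-a+a$ or to treat them as equal.
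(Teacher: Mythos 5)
Your proposal is correct and follows essentially the same route as the paper: part 1 is proved by exhibiting $a\circ(-b)$ and $-a+a\circ b-a$ as an inverse pair in $(S,+)$ via the two defining identities, using reverse-distributivity and $x+(-x)+x=x$, and part 2 is obtained by applying part 1 to $-b$ and collapsing the resulting sandwich $a-a+a\circ b-a+a$ with idempotency of $a-a$. The only cosmetic difference is that you make the identification $\lambda_a(b)=-a+a\circ b$ explicit, which the paper leaves implicit.
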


\medskip

The next result concerning the map $\lambda:S\to \End\left(S,+\right)$ is already proved for skew braces, cf. \cite[Proposition 1.9 - Corollary 1.10]{GuVe17}.
\begin{prop}\label{prop:lambda-hom}
  Let $S$ be a weak brace. Then, the map $\lambda_a$ is an endomorphism of $(S,+)$, for every $a \in S$. Moreover, the map $\lambda:S\to \End\left(S,+\right), a\mapsto \lambda_a$ is a homomorphism of the inverse semigroup $\left(S,\circ\right)$ into the endomorphism semigroup of $\left(S,+\right)$.
  \begin{proof}
  Let $a,b,c \in S$. Then, we obtain
  \begin{align*}
   \lambda_a(b+c)=-a + a\circ (b +c)=-a+a\circ b-a+a \circ c=\lambda_a(b)+\lambda_a(c).   
  \end{align*}
  Moreover,  we get
  \begin{align*}
     &\lambda_a(-b)+\lambda_a(b)+\lambda_a(-b)=\lambda_a(-b+b-b)= \lambda_a(-b),\\
     &\lambda_a(b)+\lambda_a(-b)+\lambda_a(b)=\lambda_a(b-b+b)=\lambda_a(b),
  \end{align*}
  thus $-\lambda_a(b)=\lambda_a(-b)$, i.e., $\lambda_a$ is an endomorphism of the inverse semigroup $(S,+)$.
  Finally, by $1.$ in \cref{le:prop-meno}, we have that
  \begin{align*}
      \lambda_{a \circ b}(c)&=-a\circ b+a\circ b\circ c=-a+a\circ(-b)-a+a\circ b\circ c\\
      &=-a+a\circ(-b+b\circ c)=-a+a\circ \lambda_b(c)=\lambda_a\lambda_b(c),
  \end{align*}
  thus we get the claim.
  \end{proof}
  \end{prop}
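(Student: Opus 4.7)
The plan is to address the three assertions in order: additivity of $\lambda_a$, compatibility with additive inverses, and multiplicativity of $a \mapsto \lambda_a$.

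First, I would rewrite $\lambda_a$ in a form convenient for weak braces. The excerpt already noted that every weak brace is an inverse semi-brace with $-a + a \circ b = a \circ (a^- + b)$, so $\lambda_a(b) = -a + a \circ b$ for all $a,b \in S$. From this identity the additivity is immediate: applying the weak-brace axiom $a \circ (b+c) = a\circ b - a + a\circ c$ gives
\begin{align*}
\lambda_a(b+c) = -a + a\circ(b+c) = -a + a\circ b - a + a\circ c = \lambda_a(b) + \lambda_a(c).
\end{align*}

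For the compatibility with additive inverses, the slickest route is to exploit uniqueness of inverses in the inverse semigroup $(S,+)$. Using additivity on $b - b + b = b$ and on $-b + b - b = -b$, I get
\begin{align*}
\lambda_a(b) + \lambda_a(-b) + \lambda_a(b) &= \lambda_a(b - b + b) = \lambda_a(b),\\
\lambda_a(-b) + \lambda_a(b) + \lambda_a(-b) &= \lambda_a(-b + b - b) = \lambda_a(-b),
\end{align*}
so by the uniqueness of the additive inverse, $-\lambda_a(b) = \lambda_a(-b)$. This shows $\lambda_a \in \End(S,+)$.

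For the homomorphism property $\lambda_{a\circ b} = \lambda_a \lambda_b$, the key maneuver is to express $-(a\circ b)$ in a usable form. Here Lemma \ref{le:prop-meno}(1) is exactly what I need: $a\circ(-b) = a - a\circ b + a$ rearranges (via the uniqueness-of-inverse argument, or equivalently via Lemma \ref{le:prop-meno}(2) combined with the fact $-(x+y) = -y - x$ in an inverse semigroup) to $-(a\circ b) = -a + a\circ(-b) - a$. Then
\begin{align*}
\lambda_{a\circ b}(c)
&= -(a\circ b) + a\circ b\circ c
= -a + a\circ(-b) - a + a\circ b\circ c\\
&= -a + a\circ\bigl(-b + b\circ c\bigr)
= -a + a\circ \lambda_b(c)
= \lambda_a\bigl(\lambda_b(c)\bigr),
\end{align*}
where the penultimate equality folds the weak-brace axiom backwards, and the last uses $-b + b\circ c = \lambda_b(c)$.

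The step I expect to be the most delicate is justifying the identity $-(a\circ b) = -a + a\circ(-b) - a$ from Lemma \ref{le:prop-meno}(1). A direct rearrangement is tempting but risks implicitly assuming commutativity of idempotents with arbitrary elements, which is the Clifford property that is not yet available at this point in the paper. The clean way is to either verify the inverse axioms $x + y + x = x$ and $y + x + y = y$ for $x = a\circ b$ and the proposed $y$, or to invoke Lemma \ref{le:prop-meno}(2) together with $-(u+v) = -v - u$ in an inverse semigroup and the already-proved $-\lambda_a(b) = \lambda_a(-b)$. Once this is in hand, the rest of the chain is routine.
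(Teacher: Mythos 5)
Your proposal is correct and follows essentially the same route as the paper: additivity from the weak-brace axiom applied to $\lambda_a(b)=-a+a\circ b$, preservation of additive inverses via uniqueness of inverses in $(S,+)$, and the homomorphism property via Lemma \ref{le:prop-meno}(1). The one step you flag as delicate, $-(a\circ b)=-a+a\circ(-b)-a$, is exactly how the paper uses that lemma (and is safely justified as you suggest, since the lemma's own proof establishes $a\circ(-b)=-(-a+a\circ b-a)$, so no Clifford property is needed).
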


\medskip

\begin{lemma}\label{prop-gamma-meno}
    Let $S$ be a weak brace. Then, it holds
    $\lambda_a(a^-) = - a$, for every $a \in S$.
\begin{proof}
Let $a\in S$. By $2.$ in \cref{le:prop-meno}, it holds 
$a\circ\left(a^-+\lambda_{a^-}\left(a\right)\right) = a\circ \left(a^-\circ a \right)= a$. Moreover, by \cref{prop:lambda-hom}, we get
   \begin{align*}
       a 
       &= a\circ\left(a^-+\lambda_{a^-}\left(a\right)\right)
       = a\circ a^- - a + a\circ\lambda_{a^-}\left(a\right)
       = a\circ a^-  + \lambda_{a\circ a^-}\left(a\right)\\
       &=a\circ a^- - a\circ a^- + a
       = a\circ a^-+a.
\end{align*}
It follows that
\begin{align*}
    \lambda_a\left(a^-\right)=-a+a\circ a^-=-\left(-a\circ a^-+a\right)=-\left(a\circ a^-+a\right)=-a.
\end{align*}
Therefore, the claim is proved.
\end{proof}
\end{lemma}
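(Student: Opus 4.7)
The plan is to first establish the auxiliary identity $a\circ a^- + a = a$, and then deduce $\lambda_a(a^-) = -a$ by a short application of the inverse-of-a-sum rule in $(S,+)$ together with the fact that $a\circ a^-$ is an additive idempotent.

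To get $a\circ a^- + a = a$, I would start from the tautology $a = a\circ(a^-\circ a)$. Applying \cref{le:prop-meno}(2) to the inner product, one rewrites $a^-\circ a = a^- + \lambda_{a^-}(a)$, so that $a = a\circ\bigl(a^- + \lambda_{a^-}(a)\bigr)$. Expanding via the distributive axiom of a weak brace gives
\[
a = a\circ a^- - a + a\circ\lambda_{a^-}(a).
\]
Now the identity $\lambda_x(y) = -x + x\circ y$ (already observed when verifying that every weak brace is an inverse semi-brace) combined with the homomorphism property of $\lambda$ from \cref{prop:lambda-hom} shows that the last two summands equal $\lambda_a\lambda_{a^-}(a) = \lambda_{a\circ a^-}(a) = -(a\circ a^-) + a$. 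Hence $a = a\circ a^- - (a\circ a^-) + a$. Since $a\circ a^- = -a + a$ belongs to $E(S,+)$, it is its own additive inverse, so $a\circ a^- - (a\circ a^-) = a\circ a^-$, which yields the desired $a = a\circ a^- + a$.

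For the final step, by definition $\lambda_a(a^-) = -a + a\circ a^-$. Applying the rule $-(x+y) = -y - x$ valid in the inverse semigroup $(S,+)$, together with $-(a\circ a^-) = a\circ a^-$ and the identity just proved, one computes
\[
\lambda_a(a^-) = -a + a\circ a^- = -\bigl(-(a\circ a^-) + a\bigr) = -\bigl(a\circ a^- + a\bigr) = -a.
\]

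The main obstacle is the first step: the naive Clifford-semigroup argument ``idempotents are central, hence $a\circ a^- + a = a$'' is not available yet, since the claim that $(S,+)$ is Clifford is established later in the paper (in fact, the present lemma is one of the ingredients for that result). One therefore has to manufacture the identity $a\circ a^- + a = a$ by hand, which is exactly what the distributive law together with the homomorphism property of $\lambda$ makes possible.
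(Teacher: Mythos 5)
Your proposal is correct and follows essentially the same route as the paper: both proofs derive the auxiliary identity $a = a\circ a^- + a$ from $a = a\circ(a^-\circ a)$ via \cref{le:prop-meno}(2), the distributive axiom, and the homomorphism property of $\lambda$ from \cref{prop:lambda-hom}, and then conclude by taking additive inverses using that $a\circ a^-$ is an additive idempotent. Your remark that the Clifford property of $(S,+)$ cannot yet be invoked is also consistent with the paper's ordering of results.
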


As a consequence of \cref{prop-gamma-meno}, if $S$ is a weak brace, we trivially have that $\rho_{a^-}\left(a\right)^-
    = a\circ\left(a^- + a^-\right)
    = \lambda_{a}\left(a^-\right)=-a$, for every $a \in S$.

\bigskip

We highlight that statements in \cref{le:prop-meno}, \cref{prop:lambda-hom}, and \cref{prop-gamma-meno} hold more in general starting from a triple $(S, +, \circ)$ for which $(S,+)$ and $(S, \circ)$ are inverse semigroups and 
$a\circ\left(b + c\right) = a\circ b - a + a\circ c$ holds, for all $a,b,c\in S$.
The additional property $a\circ a^- = - a + a$ allows one to show that the additive semigroup of any weak brace lies in the special class of Clifford semigroups. 
\begin{theor}\label{prop_+_Clifford}
  Let $S$ be a weak brace. Then, the additive structure $\left(S, +\right)$ is a Clifford semigroup.
  \begin{proof}
  In light of \cite[Exercises II.2.14 (i)]{Pe84}, to prove our statement we show that $(S,+)$ is a completely regular semigroup.
  By \cref{prop-gamma-meno} and \cref{le:prop-meno}-$2.$,  if $a \in S$, we obtain
\begin{align*}
    a - a = a + \lambda_a\left(a^-\right) 
    = a\circ a^- = -a + a.
\end{align*}
Therefore, $(S,+)$ is a Clifford semigroup.
  \end{proof}
\end{theor}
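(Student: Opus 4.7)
My plan is to reduce the claim to showing that $(S,+)$ is completely regular, invoking the standard fact (Petrich, Exercises II.2.14 (i)) that an inverse semigroup is Clifford precisely when each element commutes with its inverse. Since $(S,+)$ is already given to be an inverse semigroup by the definition of weak brace, it suffices to prove that $a+(-a)=(-a)+a$ for every $a\in S$, or equivalently $a-a=-a+a$.

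The right-hand side is immediate: by the defining axiom of a weak brace, we have $a\circ a^- = -a+a$. The plan is therefore to obtain the matching identity $a\circ a^- = a-a$ from the already established structural lemmas, so that comparing the two expressions for $a\circ a^-$ yields what we want. For this I would combine two ingredients proved just above: firstly, the formula $a\circ b = a + \lambda_a(b)$ from part (2) of \cref{le:prop-meno}, applied with $b=a^-$, giving $a\circ a^- = a + \lambda_a(a^-)$; and secondly \cref{prop-gamma-meno}, which asserts $\lambda_a(a^-)=-a$. Substituting then produces $a\circ a^- = a + (-a) = a-a$.

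Chaining these two expressions for $a\circ a^-$ yields $a-a = -a+a$, so $a$ commutes with its additive inverse $-a$. This shows $(S,+)$ is completely regular; being also inverse, it is a Clifford semigroup, and the theorem follows.

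Concerning difficulty, I expect essentially no obstacle in this proof: all the work is contained in the previously established \cref{le:prop-meno} and \cref{prop-gamma-meno}, and the theorem is a one-line consequence. The only subtlety worth flagging is the choice of characterization of Clifford semigroups — rather than trying to verify directly that idempotents of $(S,+)$ are central (which would be possible, using that $E(S,+)=E(S,\circ)$ together with commutativity of idempotents in any inverse semigroup), the shortest route is via complete regularity, which is why the axiom $a\circ a^- = -a+a$ is tailor-made for this argument.
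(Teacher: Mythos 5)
Your proposal is correct and follows exactly the same route as the paper: both reduce the claim to complete regularity of $(S,+)$ and establish $a-a = a+\lambda_a(a^-) = a\circ a^- = -a+a$ by combining \cref{le:prop-meno}-$2.$ with \cref{prop-gamma-meno} and the axiom $a\circ a^- = -a+a$. No differences worth noting.
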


\medskip

Below, we give some examples of weak braces. Let us observe that the third one ensures that the multiplicative semigroup of a weak brace is not a Clifford semigroup in general.  
\begin{exs}\label{ex:esempi}\hspace{1mm}
\begin{enumerate}
    \item Let $S,T$ be two Clifford semigroups. Then, $S \times T$ endowed with the following operations
    \begin{align*}
        \left(a,u\right) + \left(b,v\right)
            = \left(ab, vu\right)\qquad
                \left(a,u\right)\circ \left(b,v\right)
            = \left(ab, uv\right),
    \end{align*}
    for all $\left(a,u\right), \left(b,v\right)\in S\times T$, is a weak brace. 
    \smallskip
    \item Let us consider a commutative inverse semigroup $S$ and a group $T$, not necessarily abelian. If $\alpha: S\to\Aut(T), \, a \mapsto \alpha_a$ is a homomorphism of inverse semigroups, then $S\times T$ endowed with the following operations
    \begin{align*}
        \left(a,u\right) + \left(b,v\right)
        = \left(ab, u\alpha_a\left(v\right)\right)\qquad 
        \left(a,u\right)\circ \left(b,v\right)
        = \left(ab, uv\right),
    \end{align*}
    for all $\left(a,u\right), \left(b,v\right)\in S\times T$, is a weak brace.
    \smallskip
    \item Let $S,T$ be two Clifford semigroups. If $\beta: S\to\Aut(T),\, a \mapsto \beta_a$ is a homomorphism of inverse semigroups, then $S\times T$ endowed with the operations defined by
\begin{align*}
    \left(a,u\right) + \left(b,v\right)
    = \left(ab, uv\right)\qquad
    \left(a,u\right)\circ \left(b,v\right)
    = \left(ab, u\beta_a\left(v\right)\right),
    \end{align*}
for all $\left(a,u\right), \left(b,v\right)\in S\times T$, is a weak brace.
\end{enumerate}
\end{exs}

Note that the previous examples include those of skew braces provided by Koch and Truman in \cite[Example 6.1]{KoTr20} and by Guarnieri and Vendramin in \cite[Examples 1.4-1.5]{GuVe17}.\\
Further examples of weak braces will be subsequently presented in the last two sections. Specifically, the third weak brace in \cref{ex:esempi} will be a particular case of a construction we deal with in \cref{sez-4}.

\bigskip

The following propositions are crucial to prove that any weak brace determines a solution. 
\begin{prop}\label{nuove_lambda_rho}
Let $S$ be a weak brace. Then, the following hold:
\begin{enumerate}
    \item $\lambda_a(b)=-a\circ b \circ b^-+a\circ b$,
    \item $\lambda_a\left(b\right) = a\circ b\circ \rho_b\left(a\right)^-$,
    \item $\rho_b\left(a\right)^- = b^-\circ a^- - b^-$,
    \item $\rho_b\left(a\right)
= \lambda_a\left(b\right)^-\circ a \circ b$,
\end{enumerate}
for all $a,b\in S$.
In particular,
\begin{align*}
    \lambda_a(b)=a\circ b \circ b^- \circ \left(a^-+b\right)
    \qquad
    \rho_b(a) = \left(a^-+b\right)^-\circ a^-\circ a\circ b,
\end{align*}
for all $a,b\in S$.
\begin{proof}
$1.$ \ Let $a,b \in S$. Then,
\begin{align*}
    \lambda_a\left(b\right)&=-a+a\circ \left(b-b+b\right)\\
    &=-a+a\circ \left(-b\circ b^-+b\right)\\
    &=-a+a\circ \left(-b\circ b^-\right)-a+a\circ b\\
    &=-a\circ b \circ b^-+a\circ b. &\mbox{by \cref{le:prop-meno}-$1.$}
\end{align*}
$2.$ \ If $a, b\in S$, we have that 
\begin{align*}
\lambda_a\left(b\right)
&=-a\circ b \circ b^-+a\circ b&\mbox{by $1.$}\\
&= -a\circ b+a\circ b-a\circ b \circ b^- + a\circ b&\mbox{by \cref{prop_+_Clifford}}\\
&=a\circ b \circ \left(a\circ b \right)^--a\circ b \circ b^-+a\circ b\\
    &=a\circ b \circ b^-\circ \left(a^-+b\right)\\
    &= a\circ b\circ\rho_b\left(a\right)^-.
\end{align*}
$3.$ \ Let $a,b \in S$. Then, we get
\begin{align*}
    \rho_b\left(a\right)^-
    = b^-\circ \left(a^-+b\right)=b^-\circ a^--b^-+b^-\circ b
    = b^-\circ a^--b^-.
\end{align*}
$4.$ \ If $a,b \in S$, we obtain that
\begin{align*}
    \left(\lambda_a\left(b\right)^-\circ a \circ b\right)^-
    &= \left(a\circ b\right)^-\circ a\circ  \left(a^-+b\right)\\
    &=\left(a\circ b\right)^- -\left(a\circ b\right)^-\circ a+\left(a\circ b\right)^-\circ a\circ b\\
    &=\left(a\circ b\right)^- -b^-\circ a^-\circ a&\mbox{by \cref{prop_+_Clifford}}\\
    &=b^-\circ a^--b^-+b^-\circ \left(-a^-\circ a\right)-b^- &\mbox{by \cref{le:prop-meno}-$1$.}\\
    &=b^-\circ \left(a^-+a^-\circ a\right)-b^-\\
    &=b^-\circ a^--b^-\\
    &=\rho_b\left(a\right)^-, &\mbox{by $3.$}
\end{align*}
which complete our claim.
\end{proof}
\end{prop}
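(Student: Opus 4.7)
The plan is to prove the four identities in order, then derive the ``in particular'' formulas by substitution. The main tools are the weak brace axioms, the identity $\lambda_a(b) = -a + a\circ b$ (immediate from the second relation in \cref{def:skew-inverse}), \cref{le:prop-meno}, \cref{prop:lambda-hom}, and \cref{prop_+_Clifford}, which ensures that $(S,+)$ is Clifford so that idempotents are central and every $e \in E(S,+) = E(S,\circ)$ satisfies $-e = e$.

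For part 1, I would begin with $\lambda_a(b) = -a + a\circ b$ and rewrite the right-hand factor using the Clifford identity $b = b - b + b$. Since the weak brace axiom gives $-b + b = b\circ b^-$ and $b\circ b^-$ is an idempotent equal to its own additive inverse, this becomes $b = -b\circ b^- + b$. Applying left distributivity of $\circ$ over $+$ to $a\circ(-b\circ b^- + b)$ and then \cref{le:prop-meno}-$1.$ to expand $a\circ(-b\circ b^-)$, the extra $-a + a$ terms combine into a central idempotent that absorbs into $-a\circ b\circ b^-$, delivering $\lambda_a(b) = -a\circ b\circ b^- + a\circ b$.

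Part 3 will follow from a direct expansion: $\rho_b(a)^- = b^-\circ(a^- + b) = b^-\circ a^- - b^- + b^-\circ b$, after which the trailing idempotent $b^-\circ b$ is absorbed using the Clifford structure of $(S,+)$. For part 2, I would start from the formula in part 1, insert $-a\circ b + a\circ b$ at the front (legitimately, as this is a central idempotent of $(S,+)$), rewrite $-a\circ b + a\circ b = a\circ b\circ(a\circ b)^- = a\circ b\circ b^-\circ a^-$, and then factor $a\circ b\circ b^-$ on the left to obtain $a\circ b\circ b^-\circ(a^- + b) = a\circ b\circ\rho_b(a)^-$. For part 4, I would compute the inverse $(\lambda_a(b)^-\circ a\circ b)^-$ using \cref{le:prop-meno}-$1.$ to expand $a\circ(-a^-\circ a)$, collect the resulting terms with the Clifford identity $-a^-\circ a + a^-\circ a = a^-\circ a$, and use part 3 to recognise the result as $\rho_b(a)^-$. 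The ``in particular'' formulas then follow by substituting the expression from part 3 into part 2 and into the inverse of part 4.

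The main obstacle I foresee is the idempotent bookkeeping. Because $(S,+)$ is Clifford rather than a group, the usual skew brace cancellations are replaced by a more delicate game in which central idempotents must be absorbed correctly and one must verify that the elements being cancelled lie in the same maximal subgroup of $(S,+)$. Parts 1 and 2 are the ones where this bookkeeping is most intricate, whereas parts 3 and 4 are closer to a straightforward expansion once part 1 is in hand.
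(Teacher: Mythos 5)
Your proposal is correct and follows essentially the same route as the paper: part 1 via $b=b-b+b$ and \cref{le:prop-meno}-$1.$, part 2 by prefixing the central idempotent $-a\circ b+a\circ b$ and factoring out $a\circ b\circ b^-$ through the distributivity axiom, part 3 by direct expansion with absorption of the trailing idempotent, and part 4 by inverting $\lambda_a\left(b\right)^-\circ a\circ b$ and reducing to part 3. The only slip is cosmetic: in part 4 the term to be expanded via \cref{le:prop-meno}-$1.$ is $b^-\circ\left(-a^-\circ a\right)$ rather than $a\circ\left(-a^-\circ a\right)$.
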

\medskip

As shown in \cite[Proposition 6]{CaCoSt17}, if $S$ is a left cancellative semi-brace, then the map $\rho:S\to \mathcal{T}_S, b\mapsto \rho_b$ is a semigroup anti-homomorphism of the group $\left(S,\circ\right)$ into the monoid $\mathcal{T}_S$. This property also holds in any weak brace.
\begin{prop}\label{prop:rho-anti}
  Let $S$ be a weak brace. Then, the map $\rho:S\to \mathcal{T}_S, b\mapsto \rho_b$  is a semigroup anti-homomorphism of the inverse semigroup $\left(S,\circ\right)$ into the monoid $\mathcal{T}_S$.
  \begin{proof}
 Let $a,b,c \in S$. Then, we get
  \begin{align*}
     \left(\rho_c\rho_b(a)\right)^-
     &=c^- \circ \rho_b(a)^--c^-&\mbox{by \cref{nuove_lambda_rho}-$3.$}\\
     &= c^-\circ\left(b^-\circ a^- - b^-\right) -c^-&\mbox{by \cref{nuove_lambda_rho}-$3.$}\\
      &= c^-\circ b^-\circ a^- - c^- + c^-\circ\left(-b^-\right) - c^-\\
      &= c^-\circ b^-\circ a^- - c^-\circ b^- &\mbox{by \cref{le:prop-meno}-1.}\\
      &= \left(b\circ c\right)^-\circ a^- - \left(b\circ c\right)^-\\
      &= \rho_{b\circ c}\left(a\right)^-&\mbox{by \cref{nuove_lambda_rho}-$3.$}
 \end{align*}
 hence $\rho_{b\circ c} = \rho_c\rho_b$, which is our statement.
 \end{proof}
\end{prop}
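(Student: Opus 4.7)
The plan is to prove the identity $\rho_{b\circ c} = \rho_c \rho_b$ pointwise by showing that the two elements $\rho_c\rho_b(a)$ and $\rho_{b\circ c}(a)$ have the same multiplicative inverse in $(S,\circ)$. Since $(S,\circ)$ is an inverse semigroup, this forces the elements themselves to coincide, because $(x^-)^- = x$.

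My preferred tool is the closed form $\rho_b(a)^- = b^- \circ a^- - b^-$ established in Proposition \ref{nuove_lambda_rho}(3), which converts a question about $\circ$-inverses of compositions of $\rho$'s into an algebraic manipulation mixing $+$ and $\circ$. First, I apply this formula to $\rho_c$ evaluated at $\rho_b(a)$, getting $c^-\circ \rho_b(a)^- - c^-$. Then I apply the formula again to replace $\rho_b(a)^-$ with $b^-\circ a^- - b^-$. At this point I invoke the left distributive law (the defining property of weak braces) to push $c^-$ inside the sum; this produces the term $c^-\circ(-b^-)$ surrounded by two copies of $-c^-$.

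The next step, which I expect to be the main technical step, is to simplify $-c^- + c^-\circ(-b^-) - c^-$ using Lemma \ref{le:prop-meno}(1), namely $a\circ(-b) = a - a\circ b + a$. Applied with $a = c^-$ and $b = b^-$, this gives $c^- \circ(-b^-) = c^- - c^-\circ b^- + c^-$, and after substituting back, the flanking $-c^-$ and $c^-$ telescope (using that $(S,+)$ is Clifford by Theorem \ref{prop_+_Clifford}, so idempotents of the form $-c^- + c^-$ are central and absorb correctly) to leave just $-c^-\circ b^-$. The surviving expression is $c^-\circ b^- \circ a^- - c^-\circ b^-$.

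Finally, I rewrite $c^-\circ b^- = (b\circ c)^-$ using the standard identity in inverse semigroups, recognizing the expression as $(b\circ c)^-\circ a^- - (b\circ c)^-$, which by Proposition \ref{nuove_lambda_rho}(3) equals $\rho_{b\circ c}(a)^-$. Taking multiplicative inverses of both sides concludes that $\rho_c\rho_b(a) = \rho_{b\circ c}(a)$ for every $a \in S$, so $\rho_{b\circ c} = \rho_c\rho_b$, which is exactly the anti-homomorphism property.
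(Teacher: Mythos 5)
Your proposal is correct and follows essentially the same route as the paper: compute $\left(\rho_c\rho_b(a)\right)^-$ via the formula $\rho_b(a)^- = b^-\circ a^- - b^-$ of \cref{nuove_lambda_rho}, distribute, simplify the middle term with \cref{le:prop-meno}-$1.$ (the paper performs the same telescoping you justify via the Clifford property), and recognize $\rho_{b\circ c}(a)^-$, concluding by uniqueness of inverses in $(S,\circ)$.
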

\medskip

\begin{rem}
If $S$ is a weak brace, the map $\rho_b$ can be written as 
  \begin{align*}
    \rho_b\left(a\right)
    = \lambda_{\lambda_a\left(b\right)^-}\left(- a\circ b + a + a\circ b\right),
\end{align*}
for all $a,b\in S$, see \cite[Theorem 3.1]{GuVe17}. In fact, we have that
\begin{align*}
    &\lambda_{\lambda_a\left(b\right)^-}\left(- a\circ b + a + a\circ b\right)&\\
    &\qquad= \lambda_{\lambda_a\left(b\right)^-}\left(- \lambda_a\left(b\right) +  a\circ b\right)\\
    &\qquad= -\lambda_{\lambda_a\left(b\right)^-}\left( \lambda_a\left(b\right)\right) + \lambda_{\lambda_a\left(b\right)^-}\left(a\circ b\right)&\mbox{by \cref{prop:lambda-hom}}\\
    &\qquad=\lambda_a\left(b\right)^- + \lambda_{\lambda_a\left(b\right)^-}\left(a\circ b\right) &\mbox{by \cref{prop-gamma-meno}}\\
    &\qquad=\lambda_a\left(b\right)^-\circ a\circ b
    &\mbox{by \cref{le:prop-meno}-$2.$}\\
    &\qquad=\rho_b\left(a\right)&\mbox{by \cref{nuove_lambda_rho}-$4.$}
\end{align*}
which is our claim. 
\end{rem}

\bigskip

%*****************************************************
\section{The solution associated to a weak brace}
%*****************************************************
In this section, we show that the map $r$ associated to any weak brace $S$ is a solution. Furthermore, such a map $r$ admits as an inverse the map $r^{op}$, that coincides with the solution associated to the opposite weak brace of $S$, and it holds $rr^{op}=r^{op}r$, namely $r$ is a completely regular element in $\mathcal{T}_{S \times S}$. Finally, we investigate the behavior of the powers of these solutions.

\medskip

Let us begin by introducing the following lemma which allows us to prove the main result of this paper.
\begin{lemma}\label{le:prodlambda-rho}
 Let $S$ be a weak brace. Then, the following identity
\begin{align}\label{eq:lambdarho-circ}
      a \circ b 
      =  \lambda_a(b) \circ \rho_b(a)
  \end{align} 
  holds, for all $a, b \in S$. 
  \begin{proof}
  Let $a,b \in S$. Then, we have
\begin{align*}
    a\circ \left(a^- + b\right)\circ \left(a^- + b\right)^- \circ b&=  a\circ\left(a^- + b - b -a^-\right) \circ b\\
    &=  a\circ\left(a^--a^- + b-b\right)\circ b &\mbox{by \cref{prop_+_Clifford}}\\
    &=  a\circ\left(a^-\circ a + b\circ b^-\right)\circ b\\
    &= \left(a+\lambda_a\left( b \circ b^-\right)\right) \circ b\\
    &=a \circ b \circ b^- \circ b &\mbox{by \cref{le:prop-meno}-$2.$}\\
    &=a \circ b.
\end{align*}
Therefore, this is the desired conclusion.
  \end{proof}
\end{lemma}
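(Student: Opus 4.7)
The plan is to verify the identity by expanding the right-hand side $\lambda_a(b)\circ\rho_b(a) = a\circ(a^-+b)\circ(a^-+b)^-\circ b$ and collapsing the middle product $(a^-+b)\circ(a^-+b)^-$ using the weak-brace axiom that links the two operations.

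First I would observe that $(a^-+b)\circ(a^-+b)^-$ is an idempotent of $(S,\circ)$, and by the defining axiom $x\circ x^- = -x + x$ it equals $-(a^-+b)+(a^-+b)$. Since in any inverse semigroup $-(x+y) = -y + (-x)$, this rewrites as $-b - a^- + a^- + b$.

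Next, the crucial ingredient is Theorem~\ref{prop_+_Clifford}, which tells us that $(S,+)$ is a Clifford semigroup, so its idempotents are central. The two short terms $-a^- + a^-$ and $-b + b$ are both such idempotents, and therefore I can freely shuffle them and obtain
\begin{align*}
-b - a^- + a^- + b \;=\; (-a^- + a^-) + (-b + b) \;=\; a^-\circ a + b\circ b^-,
\end{align*}
where the last equality reapplies the axiom $-x+x = x\circ x^-$ to both $a^-$ and $b$.

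Plugging this back in, the identity reduces to proving $a\circ(a^-\circ a + b\circ b^-)\circ b = a\circ b$. Here I would invoke Proposition~\ref{prop_semi_inversi}(4), which gives $a\circ(a^-\circ a + c) = a + \lambda_a(c)$ for every $c\in S$, with $c := b\circ b^-$. Then Lemma~\ref{le:prop-meno}(2) applied in the reverse direction turns $a + \lambda_a(b\circ b^-)$ back into $a\circ b\circ b^-$, and a final multiplication by $b$ on the right uses $b\circ b^- \circ b = b$ to close the argument. The main obstacle is simply bookkeeping, namely keeping the $+$-inverses and $\circ$-inverses straight and ensuring that each commutation step is justified by the Clifford property of $(S,+)$ rather than by any assumption on $(S,\circ)$, which need not be commutative or even Clifford.
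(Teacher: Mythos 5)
Your proposal is correct and follows essentially the same route as the paper: expand $(a^-+b)\circ(a^-+b)^-$ as a sum difference, use the Clifford property of $(S,+)$ from \cref{prop_+_Clifford} to rearrange it into $a^-\circ a + b\circ b^-$, and then collapse $a\circ(a^-\circ a + b\circ b^-)\circ b$ to $a\circ b\circ b^-\circ b = a\circ b$ via \cref{prop_semi_inversi}-$4.$ and \cref{le:prop-meno}-$2.$ The only cosmetic difference is that you invoke the axiom $x\circ x^-=-x+x$ directly where the paper writes $(a^-+b)-(a^-+b)$; these agree in a Clifford semigroup, so the arguments coincide.
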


\medskip

\begin{theor}\label{teo_soluzione}
Let $S$ be a weak brace. Then, the map associated to $S$ $r:S\times S\to S\times S$ defined by
\begin{align*}
    r\left(a,b\right)
    = \left(a\circ\left(a^- + b\right), \left(a^- + b\right)^-\circ b\right),
\end{align*}
for all $a,b\in S$, is a solution.
\begin{proof}
Given a set $S$, it is a routine computation verifying that a map $r:S\times S\to S\times S$ written as $r\left(a,b\right) = \left(\lambda_a\left(b\right), \, \rho_b\left(a\right)\right)$ is a solution if and only if 
\begin{align}
     &\label{first} \lambda_a\lambda_b(c)=\lambda_{\lambda_a\left(b\right)}\lambda_{\rho_b\left(a\right)}\left(c\right)\\
    &  \label{second}\lambda_{\rho_{\lambda_b\left(c\right)}\left(a\right)}\rho_c\left(b\right)=\rho_{\lambda_{\rho_b\left(a\right)}\left(c\right)}\lambda_a\left(b\right)\\
      &\label{third}\rho_c\rho_b(a)=\rho_{\rho_c\left(b\right)}\rho_{\lambda_b\left(c\right)}\left(a\right),
  \end{align} 
for all $a,b,c \in S$. 
Thus, we prove our statement by showing that these last equalities hold for the map $r$ associated to the weak brace $S$.\\
By \cref{prop:lambda-hom} and by \cref{prop:rho-anti}, the maps $\lambda$ and $\rho$ are a homomorphism and an anti-homomorphism, respectively. Thus, by \eqref{eq:lambdarho-circ}, the relations in \eqref{first} and \eqref{third} follow. \\
Finally, since by \cref{nuove_lambda_rho}, we have
  \begin{align}\label{eq:lr}
      \lambda_a(b)=a\circ b \circ b^- \circ \left(a^-+b\right) \qquad \text{and} \qquad \rho_b\left(a\right)=\lambda_a\left(b\right)^-\circ a \circ b,
  \end{align}
  we get
  \begin{align*}
     &\lambda_{\rho_{\lambda_b\left(c\right)}\left(a\right)}\rho_c\left(b\right)\\
     &\quad=\rho_{\lambda_b\left(c\right)}\left(a\right)\circ \rho_c\left(b\right)\circ \rho_c\left(b\right)^-\circ \left(\rho_{\lambda_b\left(c\right)}\left(a\right)^-+\rho_c\left(b\right)\right)&\mbox{by \eqref{eq:lr}}\\
     &\quad=\left(\lambda_a\lambda_b\left(c\right)\right)^-\circ a \circ \lambda_b\left(c\right)\circ \rho_c\left(b\right)\circ \rho_c\left(b\right)^-\circ \left(\rho_{\lambda_b\left(c\right)}\left(a\right)^-+\rho_c\left(b\right)\right)&\mbox{by \eqref{eq:lr}}\\
     &\quad=\left(\lambda_a\lambda_b\left(c\right)\right)^-\circ a \circ b\circ c\circ \left(\rho_{\rho_c\left(b\right)}\rho_{\lambda_b\left(c\right)}\left(a\right)\right)^- &\mbox{by \eqref{eq:lambdarho-circ}}\\
     &\quad=\left(\lambda_{\lambda_a\left(b\right)}\lambda_{\rho_b\left(a\right)}\left(c\right)\right)^- \circ a \circ b\circ c\circ \left(\rho_{c}\rho_{b}\left(a\right)\right)^-&\mbox{by \eqref{first} and \eqref{third}}\\
     &\quad=\left(\lambda_{\lambda_a\left(b\right)}\lambda_{\rho_b\left(a\right)}\left(c\right)\right)^- \circ \lambda_a(b) \circ \rho_b(a)\circ c\circ c^-\circ  \left(\rho_b(a)^-+c \right)&\mbox{by \eqref{eq:lambdarho-circ}}\\
    &\quad=\left(\lambda_{\lambda_a\left(b\right)}\lambda_{\rho_b\left(a\right)}\left(c\right)\right)^- \circ \lambda_a\left(b\right) \circ \lambda_{\rho_b\left(a\right)}\left(c\right)&\mbox{by \eqref{eq:lr}}\\
    &\quad=\rho_{\lambda_{\rho_b\left(a\right)}\left(c\right)}\lambda_a\left(b\right),&\mbox{by \eqref{eq:lr}}
  \end{align*}
  thus \eqref{second} is satisfied. Therefore, $r$ is a solution.
\end{proof}
\end{theor}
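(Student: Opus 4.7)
The plan is to reduce the theorem to the standard \emph{braiding identities}: a map of the form $r(a,b)=(\lambda_a(b),\rho_b(a))$ on $S\times S$ satisfies the Yang--Baxter equation if and only if it satisfies the three equations \eqref{first}, \eqref{second}, \eqref{third}. This reduction is a routine direct computation of the two sides of the Yang--Baxter equation, so the real work is to verify those three equations for the $\lambda, \rho$ associated to the weak brace~$S$. The toolbox I will use consists of three facts that are already in hand: the multiplicativity $\lambda_{a\circ b}=\lambda_a\lambda_b$ from \cref{prop:lambda-hom}, the anti-multiplicativity $\rho_{b\circ c}=\rho_c\rho_b$ from \cref{prop:rho-anti}, and the compatibility $a\circ b=\lambda_a(b)\circ\rho_b(a)$ from \cref{le:prodlambda-rho}.

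For the $\lambda$-equation \eqref{first}, I would simply combine the homomorphism property of $\lambda$ with the compatibility lemma: $\lambda_a\lambda_b=\lambda_{a\circ b}=\lambda_{\lambda_a(b)\circ\rho_b(a)}=\lambda_{\lambda_a(b)}\lambda_{\rho_b(a)}$, and evaluate at $c$. Dually, for the $\rho$-equation \eqref{third}, I would use the anti-homomorphism property of $\rho$ together with $b\circ c=\lambda_b(c)\circ\rho_c(b)$: $\rho_c\rho_b=\rho_{b\circ c}=\rho_{\lambda_b(c)\circ\rho_c(b)}=\rho_{\rho_c(b)}\rho_{\lambda_b(c)}$. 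Both of these should be essentially one-liners.

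The main obstacle is the mixed equation \eqref{second}, which does not split cleanly along $\lambda$ or $\rho$ alone. My strategy is to use the explicit formulas furnished by \cref{nuove_lambda_rho},
\begin{align*}
\lambda_a(b)=a\circ b\circ b^-\circ(a^-+b),\qquad \rho_b(a)=\lambda_a(b)^-\circ a\circ b,
\end{align*}
to rewrite the left-hand side $\lambda_{\rho_{\lambda_b(c)}(a)}\rho_c(b)$ as a product of four factors, after which the middle two factors $\lambda_b(c)\circ\rho_c(b)$ collapse to $b\circ c$ by \cref{le:prodlambda-rho}. At that point the already-established relations \eqref{first} and \eqref{third} let me swap $\lambda_a\lambda_b$ for $\lambda_{\lambda_a(b)}\lambda_{\rho_b(a)}$ and $\rho_c\rho_b$ for $\rho_{\rho_c(b)}\rho_{\lambda_b(c)}$ inside the expression. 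A second application of \cref{le:prodlambda-rho}, this time collapsing $\lambda_a(b)\circ\rho_b(a)$ back to $a\circ b$ in the reverse direction, should then repackage the result into the form $\rho_{\lambda_{\rho_b(a)}(c)}\lambda_a(b)$ dictated by the formula for $\rho$.

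The subtlety I expect to manage carefully is the bookkeeping of the idempotent factors such as $b\circ b^-$ that appear in $\lambda_a(b)=a\circ b\circ b^-\circ(a^-+b)$: because $(S,\circ)$ is only an inverse semigroup, not a group, these factors cannot simply be cancelled. The key point is that they commute with other idempotents in the Clifford semigroup $(S,+)$ (via \cref{prop_+_Clifford}) and reassemble correctly when \cref{le:prodlambda-rho} is invoked, so I would align the computation so that every idempotent introduced is absorbed by an appropriate $\lambda$- or $\rho$-formula in a later step, rather than cancelled in isolation.
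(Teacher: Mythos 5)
Your proposal is correct and follows essentially the same route as the paper: the reduction to the three braiding identities, the derivation of \eqref{first} and \eqref{third} from the homomorphism/anti-homomorphism properties of $\lambda$ and $\rho$ combined with $a\circ b=\lambda_a(b)\circ\rho_b(a)$, and the treatment of the mixed relation \eqref{second} via the explicit formulas of \cref{nuove_lambda_rho} with the collapse-and-reassemble use of \cref{le:prodlambda-rho} are exactly the paper's argument. Your remark about not cancelling the idempotent factors $b\circ b^-$ but instead absorbing them into later applications of the $\lambda$- and $\rho$-formulas is precisely the point the paper's computation is organized around.
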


\medskip

Now, to show that the solution $r$ associated to any weak brace $S$ is a completely regular map, we introduce the notion of opposite weak brace of $S$, which is consistent with that given by Koch and Truman \cite[Proposition 3.1]{KoTr20} in the context of skew braces. 

\begin{prop}\label{prop:opposite}
  Let $(S,+,\circ)$ be a weak brace and define $a+^{op}b:= b + a$, for all $a,b \in S$. Then, $S^{op}:=\left(S,+^{op}, \circ\right)$ is a weak brace, which we call the \emph{opposite weak brace of $S$}.
  \begin{proof}
  Clearly, $\left(S, +^{op}\right)$ is an inverse semigroup with $-^{op}a=-a$, for every $a \in S$.
  Thus,
  \begin{align*}
      -^{op}a +^{op} a = a - a 
      = a\circ a^-.
  \end{align*}
  Moreover, if $a,b,c\in S$, we get 
  \begin{align*}
      a \circ \left(b +^{op}c\right)=a \circ \left(c+b\right)=a \circ c-a+a\circ b=a \circ b -^{op}a+^{op}a\circ c,
  \end{align*}
  thus $S^{op}$ is a weak brace.
  \end{proof}
\end{prop}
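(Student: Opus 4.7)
The plan is to verify the four defining axioms of a weak brace (from \cref{def:skew-inverse}) for the triple $S^{op}=(S,+^{op},\circ)$. Since the multiplicative operation is unchanged, $(S,\circ)$ is still an inverse semigroup for free, so all the work lies on the additive side and in checking the two compatibility conditions.

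First I would check that $(S,+^{op})$ is an inverse semigroup. Associativity follows by direct expansion: $(a+^{op}b)+^{op}c$ and $a+^{op}(b+^{op}c)$ both equal $c+b+a$ in $(S,+)$. The inverse relations $a+x+a=a$ and $x+a+x=x$ are invariant under reversing the order of summation, so the same element $-a$ serves as the inverse of $a$ in $(S,+^{op})$; consequently $-^{op}a=-a$ and $E(S,+^{op})=E(S,+)$. Next I would verify the distributive law $a\circ(b+^{op}c)=a\circ b-^{op}a+^{op}a\circ c$. The left-hand side is $a\circ(c+b)$, which by the original weak brace axiom equals $a\circ c-a+a\circ b$. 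Unwinding $+^{op}$ and $-^{op}$ on the right-hand side gives $a\circ c+(-a)+a\circ b$, which matches.

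The only genuinely delicate point is the second axiom $a\circ a^-=-^{op}a+^{op}a$. The right-hand side simplifies to $a-a$, whereas the weak brace axiom in $S$ only gives $a\circ a^-=-a+a$, so I must know that $-a+a=a-a$. This is precisely where \cref{prop_+_Clifford} is used: $(S,+)$ is a Clifford semigroup, and in such a semigroup every element commutes with its unique additive inverse, so $-a+a=a-a$ and the axiom follows. After this step, all the conditions of \cref{def:skew-inverse} are satisfied for $S^{op}$, and therefore $S^{op}$ is a weak brace.
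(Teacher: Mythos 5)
Your proposal is correct and follows essentially the same route as the paper: verify that $(S,+^{op})$ is an inverse semigroup with $-^{op}a=-a$, check the distributivity axiom by reversing the sum, and reduce the axiom $a\circ a^-=-^{op}a+^{op}a$ to the identity $a-a=-a+a$ supplied by \cref{prop_+_Clifford}. The paper performs this last step implicitly by writing $a-a=a\circ a^-$ directly, whereas you spell out the appeal to the Clifford property, but the argument is the same.
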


Simple examples of opposite weak braces are the trivial and the almost trivial ones.

\medskip

\begin{rem}\label{rop}
Given a weak brace $S$, let us observe that the solution $r^{op}$ associated to $S^{op}$ can be written as
    \begin{align*}
      r^{op}\left(a,b\right)
      = \left(a\circ b-a, \ \left(a\circ b-a\right)^-\circ a \circ b\right)
      = \left(\rho_{a^-}\left(b^-\right)^-, \ \lambda_{b^-}\left(a^-\right)^-\right),
  \end{align*}
  for all $a,b \in S$. 
  Indeed, by \cref{nuove_lambda_rho}-$3.$,
   \begin{align*}
      \lambda_a\left(b\right)^{op} =
      a\circ \left(b+a^-\right)= a \circ b-a +a\circ a^-=a \circ b-a = \rho_{a^-}\left(b^-\right)^-
  \end{align*}
  and 
  \begin{align*}
    &\rho_b\left(a\right)^{op}
    = \left(\lambda_a\left(b\right)^{op}\right)^-\circ a\circ b&\mbox{by  \cref{nuove_lambda_rho}-$4.$}\\
    &= \rho_{a^-}\left(b^-\right)\circ a\circ b\\
    &= \lambda_{b^-}\left(a^-\right)^-,&\mbox{by \cref{nuove_lambda_rho}-$2.$}
    \end{align*}
    for all $a,b\in S$.
  \end{rem}

\medskip

The following theorem illustrates a significant property of the solution $r$ associated to an arbitrary weak brace $S$, namely $r$ has a behavior close to bijectivity. Moreover, this result includes that in \cite[Theorem 4.1]{KoTr20}, where it is proved that the inverse of the bijective solution $r$ associated to a skew brace $S$ is the solution $r^{op}$ associated to the opposite skew brace of $S$.
\begin{theor}\label{theor_rropr}
  Let $S$ be a weak brace, $r$ the solution associated to $S$, and $r^{op}$ the solution associated to the opposite weak brace of $S$. Then, the following hold 
  \begin{align*}
      r\, r^{op}\, r = r, \qquad
      r^{op}\, r\, r^{op} = r^{op}, \qquad \text{and}\qquad rr^{op} = r^{op}r,
  \end{align*}
  namely, $r$ is a completely regular element in $\mathcal{T}_{S\times S}$. Moreover, $rr^{op}$ is a solution.
  In particular, if $S$ is a skew brace, then the solution $r$ is bijective with $r^{-1} = r^{op}$.
 \begin{proof}
    Let $a,b \in S$. Initially, we prove that $rr^{op} = r^{op}r$. Note that, by \cref{prop:lambda-hom}, we have
    \begin{align}\label{eq:somma}
        a\circ b - a\circ b + a
        = a\circ b +\lambda_{a}\left(-b\right)
        = a\circ \left(b-b\right)
        = a\circ b\circ b^-.
    \end{align}
    Thus, the first component of $rr^{op}\left(a,b\right)$ is
\begin{align*}
   -\lambda_{a}\left(b\right)^{op} + \lambda_{a}\left(b\right)^{op}\circ \rho_{b}\left(a\right)^{op}
   &=a-a\circ b+a\circ b &\mbox{by \cref{rop} and \eqref{eq:lambdarho-circ}}\\
   &=a\circ b-a\circ b+a &\mbox{by \cref{prop_+_Clifford}}\\
   &=a\circ b\circ b^-.&\mbox{by \eqref{eq:somma}}
\end{align*}
Moreover, by \cref{nuove_lambda_rho}-$4.$ and by \eqref{eq:lambdarho-circ}, the second component of $rr^{op}(a,b)$ is equal to
\begin{align*}
   \left(\lambda_{\lambda^{op}_{a}\left(b\right)}\rho^{op}_{b}\left( a \right)\right)^-\circ\lambda^{op}_{a}\left(b\right)\circ \rho^{op}_{b}\left( a \right)
    = \left(a\circ b\circ b^-\right)^-\circ a \circ b=a^-\circ a\circ b.
\end{align*}
    Besides, we obtain that
    \begin{align*}
      r^{op}r\left(a,b\right) 
        &= \left(\lambda_a\left(b\right)\circ \rho_b\left(a\right) - \lambda_a\left(b\right), \ \left(\lambda_a\left(b\right)\circ \rho_b\left(a\right) - \lambda_a\left(b\right)\right)^-\circ \lambda_a\left(b\right)\circ\rho_b\left(a\right) \right)\\
        &= \left(a\circ b - a\circ b + a, \left(a\circ b - a\circ b + a\right)^-\circ a\circ b\right)&\mbox{by \eqref{eq:lambdarho-circ}}\\
        &= \left(a\circ b\circ b^-, \  a^-\circ a\circ b\right).&\mbox{by \eqref{eq:somma}}
    \end{align*}
    Therefore, the map $r^{op}r = rr^{op}$. Moreover, it is a routine computation to verify that $rr^{op}$  satisfies \eqref{first}, \eqref{second}, and \eqref{third}, hence it is a solution. It follows that, by \cref{nuove_lambda_rho}-$1.$, the first component of $r\, r^{op}\, r(a,b)$ is
\begin{align*}
    \lambda_{a\circ b\circ b^-}\left(a^{-}\circ a\circ b \right)
    =-a\circ b\circ b^-+a\circ b=\lambda_a\left(b\right)
\end{align*}
and,  by \cref{nuove_lambda_rho}-$4.$, the second component is equal to
 \begin{align*}
  \rho_{a^-\circ a\circ b}\left(a\circ b\circ b^{-} \right) =
   \lambda_a\left(b\right)^-\circ a \circ b=\rho_b\left(a\right),
 \end{align*}
and so $r\, r^{op}\, r = r$. 
Furthermore, by \cref{rop}, the first component of $r^{op}\,r\,r^{op}(a,b)$ is
 \begin{align*}
     a\circ b\circ b^-\circ a^-\circ a\circ b &- a\circ b\circ b^-= a\circ b - a\circ b\circ b^-\\
     &= a\circ b - a + a\circ \left(-b\circ b\right)^- - a &\mbox{by \cref{le:prop-meno}-$1.$}\\
     &= a\circ \left(b + b\circ b^-\right) - a \\
     &= a\circ b - a=\lambda^{op}_a\left(b\right)
 \end{align*}
 and the second one is
\begin{align*}
    \left(a\circ b - a\right)^-\circ a\circ b\circ b^-\circ a^-\circ a\circ b
    = \left(a\circ b - a\right)^-\circ a\circ b
    = \rho^{op}_b\left(a\right),
\end{align*}
i.e., $r^{op}\,r\,r^{op} = r^{op}$.
Therefore, $r$ is a completely regular map. Finally, if $S$ is a skew brace, by \cite[Thereom 3.1]{GuVe17}, $r$ is bijective, hence we clearly have that $r^{op} = r^{-1}$.
\end{proof}
\end{theor}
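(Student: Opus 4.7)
The plan is to verify each of the four claims by direct computation, leveraging three already-established tools: the factorization $a\circ b = \lambda_a(b)\circ \rho_b(a)$ from \cref{le:prodlambda-rho}, the explicit form of the opposite maps $\lambda^{op}_a(b) = a\circ b - a$ and $\rho^{op}_b(a) = (a\circ b - a)^-\circ a\circ b$ from \cref{rop}, and the Clifford property of $(S,+)$ from \cref{prop_+_Clifford}.

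First I would record the auxiliary identity $a\circ b - a\circ b + a = a\circ b\circ b^-$, which follows by rewriting the left-hand side as $a\circ b + \lambda_a(-b)$, then as $a + \lambda_a(b-b) = a\circ(b-b) = a\circ(b\circ b^-)$, using that $\lambda_a$ is additive (\cref{prop:lambda-hom}), the identity $a\circ x = a + \lambda_a(x)$ from \cref{le:prop-meno}-$2.$, and $b-b = b\circ b^-$ in the Clifford semigroup $(S,+)$. With this in hand, the computation of both $rr^{op}(a,b)$ and $r^{op}r(a,b)$ reduces to a short exercise. For $rr^{op}$, the first component $-\lambda^{op}_a(b) + \lambda^{op}_a(b)\circ\rho^{op}_b(a)$ becomes $a - a\circ b + a\circ b$, which after swapping idempotents (central in $(S,+)$) equals $a\circ b - a\circ b + a = a\circ b\circ b^-$; the second component is handled using $\lambda^{op}_a(b)\circ\rho^{op}_b(a) = a\circ b$ and the commutation of the idempotents $a^-\circ a$ and $b\circ b^-$ inside $(S,\circ)$, yielding $a^-\circ a\circ b$. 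The computation of $r^{op}r$ is symmetric. This establishes $rr^{op} = r^{op}r$ and provides the explicit common form $(a,b)\mapsto (a\circ b\circ b^-,\ a^-\circ a\circ b)$.

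To obtain $rr^{op}r = r$, I would apply $r$ to this intermediate pair: the first component $\lambda_{a\circ b\circ b^-}(a^-\circ a\circ b) = -a\circ b\circ b^- + a\circ b$ collapses to $\lambda_a(b)$ by \cref{nuove_lambda_rho}-$1.$, and the second component collapses to $\lambda_a(b)^-\circ a\circ b = \rho_b(a)$ by \cref{nuove_lambda_rho}-$4.$. The dual identity $r^{op}rr^{op} = r^{op}$ is handled analogously, applying $r^{op}$ to the same intermediate pair and invoking \cref{le:prop-meno}-$1.$ to simplify the resulting additive expression down to $\lambda^{op}_a(b)$ and $\rho^{op}_b(a)$. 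That $rr^{op}$ is itself a solution then reduces to a direct check of the three relations \eqref{first}--\eqref{third} for its coordinate maps, essentially automatic from its transparent closed form. Finally, when $(S,\circ)$ is a group, the idempotents $b\circ b^-$ and $a^-\circ a$ both collapse to the identity, so $rr^{op} = \id_{S\times S}$; combined with $rr^{op}=r^{op}r$, this upgrades the completely regular relation to $r^{-1} = r^{op}$.

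The main obstacle is really bookkeeping: distinguishing the additive inverse $-x$ from the multiplicative inverse $x^-$, and recognising when expressions of the form $x - y + z$ can be rearranged using the centrality of idempotents in the Clifford semigroup $(S,+)$ together with the commutativity of the idempotents in the inverse semigroup $(S,\circ)$. Since every algebraic tool required has already been collected in \cref{nuove_lambda_rho}, \cref{le:prodlambda-rho}, \cref{prop:lambda-hom}, and \cref{prop_+_Clifford}, no new structural input is needed.
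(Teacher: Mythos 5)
Your proposal is correct and follows essentially the same route as the paper: the same auxiliary identity $a\circ b - a\circ b + a = a\circ b\circ b^-$, the same reduction of both $rr^{op}$ and $r^{op}r$ to the closed form $(a\circ b\circ b^-,\, a^-\circ a\circ b)$, and the same use of \cref{nuove_lambda_rho} and \cref{le:prop-meno} to collapse $r\,r^{op}\,r$ and $r^{op}\,r\,r^{op}$. The only (harmless) divergence is at the very end: you deduce $r^{-1}=r^{op}$ for skew braces by observing that $rr^{op}=\id_{S\times S}$ when the idempotents are trivial, whereas the paper instead cites the known bijectivity of $r$ from \cite[Theorem 3.1]{GuVe17}.
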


\medskip

\begin{rems}\label{remark_lambda_rho_inverse}\hspace{1mm}
\begin{enumerate}
    \item In general, one can note that every solution may have more than one inverse. An example is given by the solution $r$ on a monoid $(S,\circ)$, with $|S|\geq 1$, defined by $r\left(a,b\right) = \left(a\circ b,\, 1\right)$, for all $a,b\in S$. Indeed, $r$ is an idempotent solution (cf. \cite[Examples 1.1]{CaMaSt20}), thus an inverse is $r$ itself.
    Furthermore, the idempotent solution $s\left(a,b\right) = \left(1,\, a\circ b\right)$ is another inverse of $r$, which does not commute with $r$. 
\vspace{1mm}

\item If $S$ is a weak brace, then the solution $r$ associated to $S$ has a behavior which is close to non-degeneracy. Indeed, as observed in \cref{lambdaalambaa-}, the map $\lambda_a$ admits the map $\lambda_{a^-}$ as an inverse, for every $a\in S$. Similarly, by \cref{prop:rho-anti}, the map $\rho_b$ admits $\rho_{b^-}$ as an inverse, for every $b \in S$. Evidently, if $(S, \circ)$ is a Clifford semigroup, such maps are completely regular. In the next section, we will provide a weak brace in \cref{ex_semidirect} for which $\lambda_{a \circ a^-}\neq\lambda_{a^- \circ a}$.
\end{enumerate}
\end{rems}

\medskip

\begin{rem}
Note that the map $rr^{op}$ in \cref{theor_rropr} can be obtained starting from an arbitrary inverse semigroup. Namely, if $(S, \circ)$  is an inverse semigroup, the map $s=rr^{op}$ given by \begin{align*}
        s\left(a,b\right)
        = \left(a \circ b \circ b^{-}, a^{-} \circ a \circ b\right),
    \end{align*}
    for all $a,b\in S$, is a solution that is idempotent.
\end{rem}

\bigskip

Now, we focus on the behavior of the powers of the solutions $r$. We start by proving the following lemma, which is already known for solutions associated to skew braces, see \cite[Lemma 4.13]{SmVe18}. The properties obtained until now allow us to prove it in a similar way even in the more general context.
\begin{lemma}\label{lemma_potenze}
	Let $S$ be a weak brace and $r$ the solution associated to $S$. Then, the following hold
	\begin{align*}
	&r^{2n}\left(a,b\right)
	=\left(-n\left(a \circ b \right)+a+n\left(a \circ b\right), \left(-n\left(a \circ b \right)+a+n\left(a \circ b\right)\right)^- \circ a \circ b\right)\\
	&r^{2n+1}\left(a,b\right)
	=\left(-n\left(a \circ b \right)-a+(n+1)\left(a \circ b\right), \left(-n\left(a \circ b \right)-a+(n+1)\left(a \circ b\right)\right)^- \circ a \circ b\right),
	\end{align*}
	for every $n\in \mathbb{N}$, $n>0$, and for all $a,b \in S$.
	\begin{proof}
		For the sake of simplicity, 
		set $\lambda_a^{(1)}\left(b\right):= \lambda_a\left(b\right)$ and $\rho_b^{(1)}\left(a\right):= \rho_b\left(a\right)$,
		and for every $n\in \mathbb{N}$, $n >0$, write 
		\begin{align*}
		\lambda_a^{(n)}\left(b\right):= \lambda_{\lambda_a^{(n-1)}\left(b\right)}\rho_b^{(n-1)}\left(a\right) \quad \text{and} \quad \rho_b^{(n)}\left(a\right):= \rho_{\rho_b^{(n-1)}\left(a\right)}\lambda_a^{(n-1)}\left(b\right),
		\end{align*}
		for all $a,b \in S$. It is a routine computation to check that
		\begin{align*}
		r^n\left(a,b\right) = \left(\lambda_a^{(n)}\left(b\right), \rho_b^{(n)}\left(a\right)\right)
		\end{align*}
		and
		\begin{align}\label{gamma_eta_n}
		  \lambda_a^{(n)}\left(b\right)\circ \rho_b^{(n)}\left(a\right) = a \circ b
		\end{align}
		hold, for all $a,b \in S$. Furthermore, by \eqref{gamma_eta_n} and by \cref{nuove_lambda_rho}-$4.$, proceeding by a simple induction we obtain
		\begin{align*}
		    \rho_b^{(n)}\left(a\right) = \left(\lambda_a^{(n)}\left(b\right)\right)^- \circ a \circ b,
		\end{align*}
		for all $a,b \in S$. Hence, we only prove that
		\begin{align}
		    \label{gamma_pari}&\lambda_a^{(2n)}(b) = -n\left(a \circ b \right) + a + n\left(a \circ b\right),\\
		    \label{gamma_dispari}&\lambda_a^{(2n+1)}(b) = -n\left(a \circ b \right) - a + (n + 1)\left(a \circ b\right),
		\end{align}
		for all $a,b \in S$, and we proceed by induction on $n>0$.\\ If $a,b \in S$, for $n = 1$, by  \eqref{eq:lambdarho-circ} we get
\begin{align*}
    \lambda_a^{(2)}\left(b\right)=\lambda_{\lambda_a\left(b\right)}\rho_b\left(a\right)=-\lambda_a\left(b\right)+\lambda_a\left(b\right)\circ \rho_b\left(a\right)=- a \circ b +a +a \circ b 
\end{align*}		
		and, 
		\begin{align*}
    \lambda_a^{(3)}\left(b\right)&=\lambda_{\lambda_a^{(2)}\left(b\right)}\rho_b^{(2)}\left(a\right)=\lambda_a^{(2)}\left(b\right) +\lambda_a^{(2)}\left(b\right)\circ\rho_b^{(2)}\left(a\right)\\
    &=\left(- a \circ b +a +a \circ b \right)+a \circ b &\mbox{by \eqref{gamma_eta_n}}\\
    &=-a \circ b-a+2\left(a \circ b\right).
\end{align*}
Assume the claim follows for some $n >1$. Then, if $n$ is even, by induction hypothesis, we have
\begin{align*}
    \lambda_a^{(2n+1)}(b)&=\lambda_{\lambda_a^{(2n)}\left(b\right)}\rho_b^{(2n)}\left(a\right)=-\lambda_a^{(2n)}\left(b\right)+\lambda_a^{(2n)}\left(b\right) \circ \rho_b^{(2n-1)}\left(a\right)\\
    &=-\lambda_a^{(2n)}\left(b\right)+a \circ b &\mbox{by \eqref{gamma_eta_n}}\\
    &= -\left(-n\left(a \circ b \right)+a+n\left(a \circ b\right)\right)+a \circ b\\
    &=-n\left(a \circ b \right)-a+(n+1)\left(a \circ b\right),\end{align*}
hence \eqref{gamma_dispari} follows. Analogously, if $n$ is odd, one can see that \eqref{gamma_pari} is satisfied.
	\end{proof}
\end{lemma}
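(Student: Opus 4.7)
The plan is an induction on $n$, reduced to computing the iterated first-component map $\lambda_a^{(n)}(b)$. First I would introduce the standard recursive notation $\lambda_a^{(n)}$ and $\rho_b^{(n)}$, verify by a routine induction that $r^n(a,b) = (\lambda_a^{(n)}(b), \rho_b^{(n)}(a))$, and establish the conservation identity
\begin{equation*}
\lambda_a^{(n)}(b)\circ \rho_b^{(n)}(a) = a\circ b,
\end{equation*}
whose base case is \cref{le:prodlambda-rho} and whose inductive step follows from the solution axioms \eqref{first}--\eqref{third} satisfied by $r$ (\cref{teo_soluzione}). Combining this with \cref{nuove_lambda_rho}-$4.$ yields $\rho_b^{(n)}(a) = \lambda_a^{(n)}(b)^-\circ a\circ b$, so the second-component formula in the statement follows automatically once the first-component formulas are established.

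To iterate $\lambda$, I would combine the identity $\lambda_x(y) = -x + x\circ y$ from \cref{le:prop-meno}-$2.$ with the conservation law to obtain the clean one-step recurrence
\begin{equation*}
\lambda_a^{(n+1)}(b) = \lambda_{\lambda_a^{(n)}(b)}\!\left(\rho_b^{(n)}(a)\right) = -\lambda_a^{(n)}(b) + \lambda_a^{(n)}(b)\circ\rho_b^{(n)}(a) = -\lambda_a^{(n)}(b) + a\circ b.
\end{equation*}
Starting from $\lambda_a^{(1)}(b) = -a + a\circ b$, direct computation yields $\lambda_a^{(2)}(b) = -(a\circ b)+a+(a\circ b)$ and $\lambda_a^{(3)}(b) = -(a\circ b)-a+2(a\circ b)$, matching the stated formulas at $n=1$. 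The inductive step then alternates predictably: taking the additive inverse of the even-form expression and adding $a\circ b$ on the right produces the odd formula at the same $n$; taking the inverse of the odd form and adding $a\circ b$ produces the even formula at $n+1$.

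The only delicate point is the manipulation of additive inverses of three-term sums such as $-(-n(a\circ b)+a+n(a\circ b))$; this requires the identity $(x+y+z)^- = -z-y-x$ together with the centrality of idempotents in $(S,+)$, both supplied by the Clifford structure of $(S,+)$ from \cref{prop_+_Clifford}. Beyond this bookkeeping I expect no genuine obstacle, since the conservation identity collapses $\lambda_a^{(n)}(b)\circ \rho_b^{(n)}(a)$ to $a\circ b$ at every stage and keeps the expressions in the intended alternating form throughout the induction.
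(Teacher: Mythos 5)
Your proposal is correct and follows essentially the same route as the paper: the same recursive definition of $\lambda_a^{(n)}$ and $\rho_b^{(n)}$, the same conservation identity $\lambda_a^{(n)}(b)\circ\rho_b^{(n)}(a)=a\circ b$ combined with \cref{nuove_lambda_rho}-$4.$ to dispose of the second component, and the same alternating induction via the recurrence $\lambda_a^{(n+1)}(b)=-\lambda_a^{(n)}(b)+a\circ b$. The only cosmetic slip is attributing the inductive step of the conservation identity to the solution axioms \eqref{first}--\eqref{third}, when it is simply another application of \cref{le:prodlambda-rho} to the pair $\bigl(\lambda_a^{(n)}(b),\rho_b^{(n)}(a)\bigr)$; likewise, reversing three-term sums needs only the inverse-semigroup identity $-(x+y+z)=-z-y-x$, not the full Clifford structure.
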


\medskip

In the following proposition, we prove that the solutions associated to a weak brace having a commutative semigroup as additive structure are cubic, i.e., $r^3 = r$. 
Clearly, if $S$ is a brace, since $r$ is bijective we obtain $r^2 = \id_{S\times S}$, see \cite{Ru07}. 
\begin{prop}
  Let $S$ be a weak brace and $r$ the solution associated to $S$. Then, if the inverse semigroup $\left(S,+\right)$ is commutative it follows that 
  $r^3 =r$.
  \begin{proof}
  Let $a,b \in S$. Then, by \cref{lemma_potenze}, we have that
  \begin{align*}
      \lambda^{(3)}_a\left(b\right)=-a \circ b-a+2\left(a \circ b\right)=-a+a\circ b-a\circ b+a\circ b=-a+a\circ b=\lambda_a\left(b\right),
  \end{align*}
  hence, by $4.$ in \cref{nuove_lambda_rho}, $\rho^{(3)}_b\left(a\right)=\left(\lambda^{(3)}_a\left(b\right)\right)^-\circ a \circ b=\left(\lambda_a\left(b\right)\right)^-\circ a \circ b=\rho_b(a)$. Therefore, the claim follows.
  \end{proof}
\end{prop}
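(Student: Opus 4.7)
The plan is to invoke \cref{lemma_potenze} for $n=1$, which already yields the explicit formula
\begin{align*}
    \lambda_a^{(3)}(b) = -(a\circ b) - a + 2(a\circ b).
\end{align*}
The task then reduces to showing that this expression collapses to $\lambda_a(b) = -a + a\circ b$ when $(S,+)$ is commutative; the claim for $\rho^{(3)}_b(a)$ will follow automatically from \cref{nuove_lambda_rho}-$4.$, since $\rho^{(n)}_b(a) = \left(\lambda^{(n)}_a(b)\right)^- \circ a \circ b$, an identity already established inside the proof of \cref{lemma_potenze}.

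For the additive simplification, I would first use commutativity of $(S,+)$ to reorder
\begin{align*}
    -(a\circ b) - a + (a\circ b) + (a\circ b)
    = -a + \left(-(a\circ b) + (a\circ b)\right) + (a\circ b).
\end{align*}
The bracketed term is the idempotent $(a\circ b)\circ(a\circ b)^-$ by \cref{prop_+_Clifford}, and I would then exploit the general fact that in any inverse semigroup the idempotent $c^-\circ c$ fixes $c$; written additively, $(-c+c)+c = c$. This collapses the expression to $\lambda^{(3)}_a(b) = -a + a\circ b = \lambda_a(b)$.

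Finally, invoking \cref{nuove_lambda_rho}-$4.$, one obtains $\rho^{(3)}_b(a) = \lambda^{(3)}_a(b)^- \circ a\circ b = \lambda_a(b)^- \circ a \circ b = \rho_b(a)$, so that $r^3(a,b) = r(a,b)$ for all $a,b\in S$.

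The main subtlety is that the additive operation is only an inverse (Clifford) semigroup and not a group, so one cannot simply cancel $-(a\circ b) + (a\circ b)$ against an additive identity. What makes the rearrangement legitimate is precisely the Clifford structure of $(S,+)$ from \cref{prop_+_Clifford} combined with the absorption property $(-c+c)+c = c$. Everything else in the argument is a direct application of the explicit formulas already packaged in \cref{lemma_potenze} and \cref{nuove_lambda_rho}.
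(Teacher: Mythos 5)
Your argument is correct and follows essentially the same route as the paper: both invoke \cref{lemma_potenze} to reduce everything to the single identity $\lambda^{(3)}_a(b)=\lambda_a(b)$, both use commutativity of $(S,+)$ to reorder $-(a\circ b)-a+2(a\circ b)$ into $-a + (a\circ b) - (a\circ b) + (a\circ b)$, collapse it via the inverse-semigroup absorption identity $c-c+c=c$, and then deduce the $\rho$-component from \cref{nuove_lambda_rho}-$4$. Your extra remark identifying $-(a\circ b)+(a\circ b)$ as an idempotent is a harmless detour; the paper simply applies the absorption identity directly.
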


\medskip

To conclude this section, we give a property of a solution associated to a weak brace $S$ with an idempotent semigroup $(S,+)$.
\begin{prop}
  Let $S$ be a weak brace and $r$ the solution associated to $S$. Then, if the inverse semigroup $\left(S,+\right)$ is idempotent it follows that 
  $r^3 =r^2$.
  \begin{proof}
 If $a,b \in S$, then by \cref{lemma_potenze}, we have that
  \begin{align*}
      \lambda^{(3)}_a\left(b\right)=-a \circ b-a+2\left(a \circ b\right)=-a\circ b-a+a\circ b = \lambda^{(2)}_a\left(b\right),
  \end{align*}
  hence $\rho^{(3)}_b\left(a\right)=\left(\lambda^{(3)}_a\left(b\right)\right)^-\circ a \circ b = \left(\lambda_a\left(b\right)^{(2)}\right)^-\circ a \circ b = \rho^{(2)}_b(a)$. Therefore, the claim follows.
  \end{proof}
\end{prop}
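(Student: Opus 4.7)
The plan is to mimic the strategy of the preceding proposition (the cubic case) and reduce everything to the explicit formulas of \cref{lemma_potenze}, together with one structural remark about idempotent inverse semigroups.

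First, I would instantiate \cref{lemma_potenze} at $n = 1$ to obtain, for all $a,b \in S$,
\begin{align*}
\lambda_a^{(2)}(b) &= -a\circ b + a + a\circ b,\\
\lambda_a^{(3)}(b) &= -a\circ b - a + 2(a\circ b).
\end{align*}
Now I would exploit the hypothesis that $(S,+)$ is idempotent in two ways. On the one hand, $2(a\circ b) = a\circ b + a\circ b = a\circ b$, which immediately trims the tail of $\lambda_a^{(3)}(b)$ down to $-a\circ b - a + a\circ b$. On the other hand, I would record the elementary observation that an idempotent inverse semigroup is a semilattice: for any $x$ in such a semigroup, $x + x + x = x$ shows that $x$ is an inverse of itself, and uniqueness of inverses then forces $-x = x$. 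Applying this to $x = a$ gives $-a = a$, and therefore
\begin{align*}
\lambda_a^{(3)}(b) = -a\circ b - a + a\circ b = -a\circ b + a + a\circ b = \lambda_a^{(2)}(b).
\end{align*}

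Once the equality of the first components is established, the equality of the second components is essentially automatic: by \cref{nuove_lambda_rho}(4) (together with \eqref{gamma_eta_n} in the proof of \cref{lemma_potenze}, which gives $\rho_b^{(n)}(a) = \bigl(\lambda_a^{(n)}(b)\bigr)^{-} \circ a \circ b$ for every positive integer $n$), we immediately deduce
\begin{align*}
\rho_b^{(3)}(a) = \bigl(\lambda_a^{(3)}(b)\bigr)^{-}\circ a \circ b = \bigl(\lambda_a^{(2)}(b)\bigr)^{-}\circ a \circ b = \rho_b^{(2)}(a).
\end{align*}
Combining both equalities yields $r^{3}(a,b) = r^{2}(a,b)$ for all $a,b \in S$, as claimed.

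There is no real obstacle here: the identity $r^3 = r^2$ becomes a one-line algebraic check once one has \cref{lemma_potenze} and the representation of $\rho_b^{(n)}$ via $\lambda_a^{(n)}$. The only point that deserves a brief justification, rather than being swept under the rug, is the reduction $-a = a$ coming from the fact that an idempotent inverse semigroup is automatically a semilattice; without this remark the shapes of $\lambda_a^{(2)}(b)$ and $\lambda_a^{(3)}(b)$ obtained from the lemma do not visibly coincide after only invoking $2(a\circ b) = a\circ b$.
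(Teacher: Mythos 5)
Your proof is correct and follows essentially the same route as the paper: instantiate \cref{lemma_potenze}, collapse $2(a\circ b)$ to $a\circ b$ by idempotency, and transfer the equality to the second components via $\rho_b^{(n)}(a)=\bigl(\lambda_a^{(n)}(b)\bigr)^-\circ a\circ b$. The only difference is that you explicitly justify the step $-a=a$ (idempotent inverse semigroups are semilattices), which the paper's computation uses silently when it equates $-a\circ b - a + a\circ b$ with $\lambda_a^{(2)}(b)$; making that explicit is a small but genuine improvement in rigor.
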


\bigskip

\section{Constructions of weak braces}\label{sez-4}

This section aims to review some of the constructions of inverse semi-braces provided in \cite{CaMaSt21} to obtain new examples belonging to the class of weak braces.

\medskip
To this purpose, it is useful to characterize inverse semi-braces which are also weak braces. 
\begin{prop}\label{prop_equivalenza}
Let $S$ be a non-empty set endowed with two operations $+$ and $\circ$ such that $(S,+)$ and $(S, \circ)$ are semigroups. Then, $S$ is a weak brace if and only if $S$ is an inverse semi-brace such that the following hold:
\begin{enumerate}
    \item $(S,+)$ is inverse,
    \item $a\circ \left(a^- + b\right)= -a + a\circ b$,
\end{enumerate}
for all $a,b\in S$.
\begin{proof}
Initially, note that we have already observed that any weak brace is an inverse semi-brace.
Conversely, assume that $S$ is an inverse semi-brace such that $1.$ and $2.$ are satisfied.  We only have to show that, for every $a\in S$, $a\circ a^- = - a + a$. 
Let us observe that the statements in  \cref{le:prop-meno},  \cref{prop:lambda-hom}, and \cref{prop-gamma-meno} still hold. If $a \in S$, we have that $a \circ a^- \in E(S,+)$. In fact,
\begin{align*}
    a\circ a^-
    = a\circ\left(a^- - a^- + a^-\right)
    = a\circ a^- + a\circ\left(a^- - a^- + a^-\right)
    = a\circ a^- + a\circ a^-.
\end{align*}
Moreover, we obtain
 \begin{align*}
  a=a+a\circ a^-.   
 \end{align*}
Indeed, by \cref{prop-gamma-meno},
$-a=\lambda_a\left(a^-\right)=a\circ \left(a^-+a^-\right)=a\circ a^-+\lambda_a\left(a^-\right)=a\circ a^--a$, hence $a=a-a\circ a^-=a+a\circ a^-$.\\
Now, we show that $a \circ a^-$ is the opposite of $-a+a$. We have that
\begin{align*}
    a\circ a^- - a + a + a\circ a^-= a\circ a^- - a + a= a\circ\left(a^- + a^-\circ a\right)= a\circ a^- 
\end{align*}
and
\begin{align*}
   -a + a + a\circ a^- - a + a=-a+a-a+a=-a+a. 
\end{align*}
Therefore, $S$ is a weak brace.
\end{proof}
\end{prop}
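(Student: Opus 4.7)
The plan is to handle the two implications separately. The forward direction is almost unpacking definitions: $(S,+)$ and $(S,\circ)$ are inverse by the very definition of a weak brace, giving (1), and (2) is obtained by substituting the weak brace identity $a\circ a^- = -a + a$ into $a\circ(a^-+b) = a\circ a^- - a + a\circ b$, then collapsing $-a+a-a = -a$. The same substitution shows the weak brace distributivity implies the inverse semi-brace distributivity, so every weak brace is an inverse semi-brace satisfying (1) and (2).

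For the converse, assume $(S,+,\circ)$ is an inverse semi-brace satisfying (1) and (2). Feeding (2) into the inverse semi-brace axiom gives at once $a\circ(b+c) = a\circ b - a + a\circ c$, the first weak brace axiom. As the paper notes just before the proposition, this is precisely the hypothesis under which \cref{le:prop-meno}, \cref{prop:lambda-hom} and \cref{prop-gamma-meno} remain valid, so I may freely use $a\circ b = a + \lambda_a(b)$, $\lambda_a \in \End(S,+)$ and $\lambda_a(a^-) = -a$. The remaining task is to extract $a\circ a^- = -a + a$.

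My strategy is to show $a\circ a^-$ is an additive inverse of the idempotent $-a + a$, and then invoke uniqueness of inverses in $(S,+)$. First, I would verify that $a\circ a^-$ is itself an idempotent of $(S,+)$: writing $a^- = a^- - a^- + a^-$ and unrolling via the inverse semi-brace axiom yields $a\circ a^- = a\circ a^- + a\circ a^-$. Second, chaining $-a = \lambda_a(a^-) = a\circ(a^-+a^-) = a\circ a^- + \lambda_a(a^-) = a\circ a^- - a$, I get $-a = a\circ a^- - a$; inverting both sides in $(S,+)$ and using that the idempotent $a\circ a^-$ is self-inverse produces $a = a + a\circ a^-$. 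Third, I would verify the two sandwich identities $(a\circ a^-) + (-a+a) + (a\circ a^-) = a\circ a^-$ and $(-a+a) + (a\circ a^-) + (-a+a) = -a + a$: the latter is immediate from $a + a\circ a^- = a$ together with the idempotency of $-a+a$, and the former reduces, after the same simplification, to $a\circ a^- - a + a = a\circ a^-$, which in turn follows from the distributive law combined with the dual identity $a^- + a^-\circ a = a^-$ obtained by applying step two with $a^-$ in place of $a$. Since $-a+a$ is idempotent it is its own inverse, and uniqueness of inverses in $(S,+)$ forces $a\circ a^- = -a + a$.

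The main obstacle, I expect, will be the final sandwich identity: the rewriting $a\circ a^- - a + a = a\circ(a^- + a^-\circ a) = a\circ a^-$ silently marries the additive and multiplicative structures and depends on having already established the intermediate identity $a + a\circ a^- = a$. Apart from that delicate step, the proof is pure inverse-semigroup bookkeeping, but keeping the two inverse operations straight (in particular, not conflating $a^-$ with $-a$) will require some care.
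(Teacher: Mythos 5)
Your proposal is correct and follows essentially the same route as the paper's proof: reduce to showing $a\circ a^- = -a+a$, establish that $a\circ a^-$ is an additive idempotent, derive $a = a + a\circ a^-$ from $\lambda_a(a^-) = -a$, verify the two sandwich identities (including the same reduction via $a^- + a^-\circ a = a^-$), and conclude by uniqueness of inverses in $(S,+)$. The step you flag as delicate is handled in the paper exactly as you describe.
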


\medskip

Let us begin by examining the construction named \emph{matched product of inverse semi-braces}. We show that any matched product of weak braces gives rise to another one without requiring any additional properties.
\noindent To this purpose, we need the maps $\alpha$ and $\beta$ in \cite[Definition 10]{CaMaSt21}, which allow for obtaining such a new weak brace having multiplicative semigroup isomorphic to a Zappa product of the starting inverse semigroups.
Hereinafter, for the ease of the reader, given two weak braces $S$ and $T$, we use the letters $a, b, c$ for the elements of $S$ and $u, v, w$ for the elements of $T$. 
\begin{defin}\label{def:mps-inv}
    Let $S$ and $T$ be two weak braces, $\alpha: T \to \Aut\left(S\right)$ a homomorphism of inverse semigroups from $\left(T,\circ \right)$ into the automorphism group of $\left(S,+\right)$, and $\beta:S\to \Aut\left(T\right)$ a homomorphism of inverse semigroups from $\left(S,\circ \right)$ into the automorphism group of $\left(T,+\right)$ such that
    	\begin{align}
    		\label{eq:mps1}
    		\alphaa{}{u}{\left(\alphaa{-1}{u}{\left(a\right)}\circ  b\right)} = a\circ  \alphaa{}{\beta^{-1}_{a}{\left(u\right)}}{\left(b\right)}\qquad\quad 
    		\beta_{a}\left(\beta^{-1}_{a}\left(u\right)\circ  v\right) = u\circ  \beta_{\alphaa{-1}{u}{\left(a\right)}}\left(v\right)
    		\end{align}
    		\begin{align}\label{eq:mps-idemp}
    		\alphaa{}{u}{\left(\alphaa{-1}{u}{\left(a\right)}\circ  a\right)} = a, \, \, \
    		\beta_{a}\left(\beta^{-1}_{a}\left(u\right)\circ  u\right) = u\, \Longrightarrow
    		\alpha_{u}\left(a\right) = a, \ \beta_{a}\left(u\right) = u
    	\end{align}
    	hold, for all $a,b \in S$ and $u,v \in T$. Then, $(S,T,\alpha,\beta)$ is called a \emph{matched product system of weak braces}.
    \end{defin}
    
    \medskip
    
    \begin{theor}[cf. \cite{CaMaSt21}, Theorem 12] \label{th:matched-inv-semi}
	Let $\left(S,T,\alpha,\beta\right)$ be a matched product system of weak braces. Then, $S\times T$ with respect to 
		\begin{align*}
		\left(a,u\right)+\left(b,v\right) &:=\left(a+b,u+v\right)\\
		\left(a,u\right)\circ \left(b,v\right) &:= \left(\alphaa{}{u}{\left(\alphaa{-1}{u}{\left(a\right)}\, \circ b\right)},\beta_{a}\left(\beta^{-1}_{a}\left(u\right)\,\circ  v\right)\right),
		\end{align*}
		for all $\left(a,u\right), \left(b,v\right) \in S \times T$, is a weak brace, called the \emph{matched product of $S$ and $T$ (via $\alpha$ and $\beta$)} and denoted by $S\bowtie T$.
		\begin{proof}
		    By \cite[Theorem 12]{CaMaSt21}, we have that $S\bowtie T$ is an inverse semi-brace. Moreover, the additive structure is clearly an inverse semigroup since $\left(S\times T, +\right)$ is the direct product of $\left(S, +\right)$  and $\left(T, +\right)$. 
		    Thus, we only need to show that condition $2.$ in \cref{prop_equivalenza} is satisfied.\\	Let $\left(a,u\right), \left(b,v\right)\in S \times T$. Then, we obtain
		    \begin{align*}
		    -\left(a,u\right)&+\left(a,u\right)\circ \left(b,v\right)\\
		    &=\left(-a,-u\right)+\left(\alphaa{}{u}{\left(\alphaa{-1}{u}{\left(a\right)}\, \circ b\right)},\beta_{a}\left(\beta^{-1}_{a}\left(u\right)\,\circ  v\right)\right)\\
		    &=\left(-a+\alphaa{}{u}{\left(\alphaa{-1}{u}{\left(a\right)}\, \circ b\right)},-u+\beta_{a}\left(\beta^{-1}_{a}\left(u\right)\,\circ  v\right)\right)\\
		    &=\left(-a+ a\circ  \alphaa{}{\beta^{-1}_{a}{\left(u\right)}}{\left(b\right)} , \, -u+u\circ  \beta_{\alphaa{-1}{u}{\left(a\right)}}\left(v\right)\right) &\mbox{by \eqref{eq:mps1}}\\
		  &=\left(\lambda_a\alphaa{}{\beta^{-1}_{a}{\left(u\right)}}{\left(b\right)}, \, \lambda_u\beta_{\alphaa{-1}{u}{\left(a\right)}}\left(v\right)\right)
		\end{align*}
		and 
		\begin{align*}
		    &\left(a,u\right)\circ \left(\left(a,u\right)^- + \left(b,v\right)\right)\\
		    &\quad=\left(a,u\right)\circ
		    \left(\left(\alpha^{-1}_{\beta^{-1}_a\left(u\right)}\left(a^-\right),\, \beta^{-1}_{\alpha^{-1}_u\left(a\right)}\left(u^-\right)\right) + \left(b,v\right)\right)\\
		    &\quad= \left(\alpha_u\left(\alpha^{-1}_u\left(a\right)\circ\left(\alpha^{-1}_{\beta^{-1}_a\left(u\right)}\left(a^-\right) + b \right)\right),\,
		    \beta_a\left(\beta^{-1}_a\left(u\right)\circ\left(\beta^{-1}_{\alpha^{-1}_u\left(a\right)}\left(u^-\right) + v\right)\right)
		    \right)\\
		    &\quad= \left(a\circ\left(a^- + \alpha_{\beta^{-1}_a\left(u\right)}\left(b\right)\right),
		    u\circ\left(u^- + \beta_{\alpha^{-1}_u\left(a\right)}\left(v\right)\right)\right)&\mbox{by \eqref{eq:mps1}}\\
		    &\quad=\left(\lambda_a\alphaa{}{\beta^{-1}_{a}{\left(u\right)}}{\left(b\right)}, \, \lambda_u\beta_{\alphaa{-1}{u}{\left(a\right)}}\left(v\right)\right).
		\end{align*}
		Therefore, by \cref{prop_equivalenza}, $S\times T$ is a weak brace.
		\end{proof}
\end{theor}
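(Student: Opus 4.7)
The plan is to leverage the characterization in \cref{prop_equivalenza} so that most of the work is already done by the known matched product construction for inverse semi-braces. First I would invoke \cite[Theorem 12]{CaMaSt21} to get for free that $S\bowtie T$ is an inverse semi-brace with the given additive and multiplicative operations; this handles the semi-brace identity $(a,u)\circ((b,v)+(c,w))=(a,u)\circ(b,v)+(a,u)\circ((a,u)^-+(c,w))$ without any extra calculation. Next, since the addition on $S\times T$ is defined componentwise and $(S,+),(T,+)$ are inverse semigroups (by hypothesis on weak braces), the direct product $(S\times T,+)$ is automatically an inverse semigroup, with inverse $-(a,u)=(-a,-u)$.

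By \cref{prop_equivalenza}, it then suffices to verify condition (2), namely
\[
(a,u)\circ\bigl((a,u)^-+(b,v)\bigr)=-(a,u)+(a,u)\circ(b,v),
\]
for all $(a,u),(b,v)\in S\times T$. I would expand both sides using the definitions of $\circ$ and $+$ in $S\bowtie T$, together with the formula for inverses in the Zappa-type multiplicative structure (where $(a,u)^-=(\alpha^{-1}_{\beta^{-1}_a(u)}(a^-),\,\beta^{-1}_{\alpha^{-1}_u(a)}(u^-))$, as the proof of \cite[Theorem 12]{CaMaSt21} records). On the right-hand side, after adding componentwise, each component reduces to an expression of the form $-x+x\circ y$ in $S$ or $T$, which equals $\lambda_x(y)$ in the respective weak brace. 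On the left-hand side, applying the twisted product distributes $\alpha_u$ and $\beta_a$ over the additive terms and lets us use the compatibility relations \eqref{eq:mps1} to pull the automorphisms through, again producing $\lambda$ applied to a transported element.

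The main step, and the only one requiring care, is matching the two sides componentwise: in the first slot I expect both to collapse to $\lambda_a\bigl(\alpha_{\beta^{-1}_a(u)}(b)\bigr)$, and in the second slot to $\lambda_u\bigl(\beta_{\alpha^{-1}_u(a)}(v)\bigr)$. This is exactly the computation already carried out for inverse semi-braces in \cite[Theorem 12]{CaMaSt21}; the point is that it goes through verbatim because \eqref{eq:mps1} is the only identity used, and this identity is part of our hypotheses.

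The main obstacle, and it is really only a bookkeeping one, is keeping track of where the idempotent condition \eqref{eq:mps-idemp} enters. I do not expect to need \eqref{eq:mps-idemp} for verifying condition (2) of \cref{prop_equivalenza} — it was introduced in \cite{CaMaSt21} to guarantee that the multiplicative structure on $S\times T$ is an inverse semigroup, and that part is already embedded in our use of the cited theorem. Thus the proof reduces to the two short componentwise calculations, and the conclusion follows from \cref{prop_equivalenza}.
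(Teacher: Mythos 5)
Your proposal is correct and follows essentially the same route as the paper: invoke the cited theorem for the inverse semi-brace structure, observe that the componentwise sum is an inverse semigroup as a direct product, and verify condition 2 of \cref{prop_equivalenza} by expanding both sides with the Zappa-type inverse formula and the compatibility relations \eqref{eq:mps1}, arriving at $\left(\lambda_a\alphaa{}{\beta^{-1}_{a}{\left(u\right)}}{\left(b\right)},\, \lambda_u\beta_{\alphaa{-1}{u}{\left(a\right)}}\left(v\right)\right)$ on each side. Your remark that \eqref{eq:mps-idemp} is not needed for this verification, being already absorbed into the cited result, also matches the paper.
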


\medskip

As shown in \cite[Theorem 15]{CaMaSt21}, the solution $r$ associated to any matched product $S \bowtie T$ of two weak braces $S$ and $T$ is exactly the matched product
of the solutions $r_S$ and $r_T$ associated to $S$ and $T$, respectively, and it is given by
		\begin{align*}
			&r\left(\left(a, u\right), 
			\left(b, v\right)\right) := 
			\left(\left(\alphaa{}{u}{\lambdaa{\bar{a}}{\left(b\right)}},\, \beta_a\lambdaa{\bar{u}}{\left(v\right)}\right),\ \left(\alphaa{-1}{\overline{U}}{\rhoo{\alphaa{}{\bar{u}}{\left(b\right)}}{\left(a\right)}},\,  \beta^{-1}_{\overline{A}}\rhoo{\beta_{\bar{a}}\left(v\right)}{\left(u\right)}\right) \right),
		\end{align*}
		\noindent where we set
		\begin{center}
		   $\bar{a}:=\alphaa{-1}{u}{\left(a\right)}$, \,\,$\bar{u}:= \beta^{-1}_{a}\left(u\right)$,\,\, $A:=\alphaa{}{u}{\lambdaa{\bar{a}}{\left(b\right)}}$,\, $U:=\beta_a\lambdaa{\bar{u}}{\left(v\right)}$,\,\, $\overline{A}:=\alphaa{-1}{U}{\left(A\right)}$,\,\, $\overline{U}:= \beta^{-1}_{A}\left(U\right)$,
		\end{center}
		for all $\left(a,u\right),\left(b,v\right)\in S\times T$ (cf. \cite[Theorem 14]{CaMaSt21}). 

\medskip

The semidirect product of two weak braces, cf. \cite[Corollary 17]{CaMaSt21}, is a particular case of the matched product.
Specifically, if $\left(S, T, \alpha, \beta\right)$ is a  matched product system, we consider $\beta_a = \id_T$, for every $a\in S$. Analogously, one can consider the case $\alpha_u = \id_S$, for every $u\in T$. A simple example of such a case is $3.$ in \cref{ex:esempi}, regarding both the Clifford semigroups $S$ and $T$ as trivial weak braces. 
In this way, the inverse semigroup $\left(S\times T, \circ\right)$ is exactly the semidirect product of the inverse semigroups $\left(S,\circ \right)$ and $\left(T,\circ \right)$ via $\alpha$ (or via $\beta$), in the sense of
\cite{Ni83} and \cite{Pr86}. Namely, 
this is the Zappa product of the two semigroups $(S,\circ)$ and $(T,\circ)$ with $^{u}a = \sigma\left(u\right)\left(a\right) = \alpha_u\left(a\right)$  and $u^a = \beta_a\left(u\right) = u$, for all $a\in S$ and $u\in T$. In particular, the multiplication $\circ$ on $S \times T$ is given by
\begin{align*}
(a,u) \circ (b,v) = \left(a\circ \, ^{u}b, \, u\circ v \right),
\end{align*}
for all $(a,u), (b,v)\in S\times T$. 

\medskip

The following is an instance of weak brace obtained starting from the semidirect product of Clifford semigroups, cf. \cite[Example 9]{CaMaSt21}.
\begin{ex}\label{ex_semidirect}
    Let $X:=\{1,x,y\}$, $S$ the upper semilattice on $X$ with join $1$, and $T$ the commutative inverse monoid on $X$ with identity $1$ in which they hold $x\circ x = y\circ y= x$  and $x\circ y = y$. 
    Consider the trivial weak braces on $S$ and $T$, respectively.
    If $\tau$ is the automorphism of $S$ given by the transposition $\tau := (x\, y)$, then the map $\sigma:T\to \Aut(S)$ given by $\sigma(1) = \sigma(x) = \id_S$ and $\sigma(y) = \tau$, 
is a homomorphism from $\left(T,\circ\right)$ into the
automorphism group of the weak brace $S$.
 Therefore, by \cref{th:matched-inv-semi}, $S \times T$ is the semidirect product of $S$ and $T$.\\
 In addition, as anticipated in \cref{remark_lambda_rho_inverse}, there exists an element $(a,u) \in S \times T$ such that $\lambda_{(a,u)\circ (a,u)^-}
 \neq \lambda_{(a,u)^-\circ (a,u)}$. Indeed,
 $\lambda_{(y,y)\circ (y,y)^-}\left(y,1\right) = \lambda_{(y,x)}\left(y,1\right) = \left(y,x\right)$ and
 $\lambda_{(y,y)^-\circ (y,y)}\left(y,1\right) = \lambda_{(x,x)}\left(y,1\right)
 = \left(1, x\right)$.
\end{ex}

\bigskip

Now, we show how to obtain a new weak brace involving the construction of the \emph{double semidirect product of inverse semi-braces}, cf. \cite[Theorem 19]{CaMaSt21}. In this case, to obtain an inverse semigroup $(S\times T,+)$, we need that the codomain of the map $\delta$ is $\Aut(S)$, cf. \cite[Theorem 4]{Wa15}. Moreover, the additional condition \eqref{eq:semibrace-Clifford} ensures that it also is a Clifford semigroup, a necessary condition by \cref{prop_+_Clifford}.
\begin{theor}\label{th_double}
  	Let $S$ and $T$ be two weak braces,  $\sigma: T\to \Aut\left(S\right)$ a homomorphism from $\left(T,\circ\right)$ into the automorphism group of the weak brace $S$, with ${}^u a:= \sigma(u)(a)$, for all $a \in S$ and $u \in T$, and $\delta:S\to \Aut\left(T\right)$ an anti-homomorphism from $\left(S, +\right)$ into the automorphism group of $\left(T, +\right)$, with $u^a:=\delta(a)(u)$, for all $a \in S$ and $u \in T$. If the following conditions
	\begin{align}\label{eq:semibrace-Clifford}
	&\left(u-u\right)^a=u-u\\
	\label{eq:semibrace-sigma-delta}
        &\left(u\circ v\right)^{\lambda_{a}\left({}^ub\right)}
		+ u\circ \left(\left(u^-\right)^b
		+ w\right)
		= u \circ \left(v^b +w\right)
	\end{align}
		hold, for all $a,b \in S$ and $u,v, w\in T$, then $S \times T$ with respect to
		\begin{align*}
		 \left(a,u\right)+\left(b,v\right)&:=\left(a+b, \, u^b+v\right)\\
		 \left(a,u\right)\circ \left(b,v\right)&:=\left(a \circ \ ^{u}{b}, \, u\circ v\right),
		\end{align*}
		for all $\left(a,u\right), \left(b,v\right) \in S \times T$,
		is a weak brace. We call such a weak brace the \emph{double semidirect product of $S$ and $T$ (via $\sigma$ and $\delta$)}.
		\begin{proof}
		By \cite[Theorem 19]{CaMaSt21}, we have that $S\bowtie T$ is an inverse semi-brace. Moreover, the additive structure is an inverse semigroup since it is the semidirect product of the semigroups $(S,+)$ and $(T,+)$. Thus, we only show that the condition $2.$ in \cref{prop_equivalenza} is satisfied. If $\left(a,u\right), \left(b,v\right)\in S \times T$, by setting $v=u^-\circ u$ in \eqref{eq:semibrace-sigma-delta}, we obtain
		\begin{align*}
		    u^{\lambda_{a}\left({}^ub\right)}
		+ u\circ \left(\left(u^-\right)^b
		+ w\right)
		&= u \circ \left(\left(u^-\circ u\right)^b +w\right)\\
		&=u \circ \left(\left(u^- - u^-\right)^b +w\right)\\
		&=u \circ \left(u^- - u^-+w\right) &\mbox{by \eqref{eq:semibrace-Clifford}}\\
		&=u \circ \left(u^-\circ u +w\right)\\
		&=u + \lambda_u \left( v\right) &\mbox{by \cref{prop_semi_inversi}-$4.$}\\
		&=u \circ v &\mbox{by \cref{le:prop-meno}-$2.$}
		\end{align*}
		Hence, we get
		\begin{align*}
		    \left(-u\right)^{\lambda_a \left( ^{u}{b}\right)} +u\circ v
		    &= \left(-u\right)^{\lambda_a \left( ^{u}{b}\right)} + u^{\lambda_a \left( ^{u}{b}\right)} + u\circ\left(\left(u^-\right)^b + v\right)\\
		    &=\left(-u + u\right)^{\lambda_a \left( ^{u}{b}\right)} + u\circ\left(\left(u^-\right)^b + v\right)\\
		    &= - u + u + u\circ\left(\left(u^-\right)^b + v\right)&\mbox{by \eqref{eq:semibrace-Clifford}}\\
		    &= - u + u + u\circ\left(u^-\right)^b - u + u\circ v\\
		    &= u\circ\left(u^-\right)^b - u + u\circ v\\
		    &= u\circ\left(\left(u^-\right)^b + v\right).
		\end{align*}
		Thus, it follows that
		    \begin{align*}
		     -\left(a,u\right)+\left(a,u\right)\circ \left(b,v\right)&=\left(-a,-u^{-a}\right)+\left(a \circ \ ^{u}{b}, \, u\circ v\right)\\  
		     &=\left(-a+a \circ \ ^{u}{b},\, \left(-u^{-a}\right)^{a \circ \ ^{u}{b}}+u \circ v\right)\\
		     &=\left(\lambda_a\left( \ ^{u}{b}\right),\, \left(-u\right)^{\lambda_a \left( ^{u}{b}\right)}+u \circ v\right)\\
		     &= \left(\lambda_a\left({}^{u}{b}\right),\, u\circ\left(\left(u^-\right)^b + v\right)\right)
		    \end{align*}
		    and
		    \begin{align*}
		        &\left(a,u\right)\circ\left(\left(a,u\right)^- + \left(b,v\right)\right)
		        = \left(a,u\right)\circ\left(\left({}^{u^-}{a^-},u^-\right)+\left(b,v\right)\right)\\
		        &= \left(a,u\right)\circ\left({}^{u^-}{a^-}+b, \left(u^-\right)^{b} + v\right)\\
		        &= \left(a\circ {}^{u}\left({}^{u^-}{a^-} + b \right), u\circ\left(\left(u^- \right)^b + v\right)\right)\\
		        &= \left(a\circ\left(a^- + {}^{u}b \right), u\circ\left(\left(u^- \right)^b + v\right)\right)\\
		        &= \left(\lambda_a\left({}^{u}b\right), u\circ\left(\left(u^- \right)^b + v\right)\right).
		    \end{align*}
		     Therefore, by \cref{prop_equivalenza}, $S\times T$ is a weak brace.
		\end{proof}
\end{theor}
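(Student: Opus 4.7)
The plan is to reduce the claim to \cref{prop_equivalenza}. Three things must be verified: (a) that $(S\times T,+,\circ)$ is an inverse semi-brace, (b) that $(S\times T,+)$ is an inverse semigroup, and (c) the identity $(a,u)\circ\bigl((a,u)^{-} + (b,v)\bigr) = -(a,u) + (a,u)\circ(b,v)$. Item (a) is immediate from \cite[Theorem 19]{CaMaSt21}, since the same hypotheses $\sigma,\delta$ together with \eqref{eq:semibrace-sigma-delta} are exactly what is required there. Item (b) is the standard fact that the semidirect product of two inverse semigroups by an anti-homomorphism into the automorphism group is again an inverse semigroup; this is precisely why $\delta$ is required to land in $\Aut(T)$ and not merely in $\End(T)$, and the additive inverse comes out as $-(a,u) = (-a,\,(-u)^{-a})$.

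For item (c), the idea is to expand both sides of the identity using the definitions of $+$, $\circ$, $-$, and $-$, and then compare componentwise. The first components of each side simplify to $\lambda_a({}^{u}b)$: on the left because $\sigma(u)$ is an automorphism of $(S,+)$ so $a\circ {}^{u}({}^{u^{-}}a^{-} + b) = a\circ(a^{-}+{}^{u}b)$, which equals $\lambda_a({}^{u}b)$ by the weak brace structure of $S$; on the right this is just $-a + a\circ {}^{u}b = \lambda_a({}^{u}b)$. Using that $\delta$ is an anti-homomorphism of $(S,+)$, the twist in the second component on the right-hand side rearranges to $(-u)^{\lambda_a({}^{u}b)}$. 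Thus the entire problem reduces to proving the single identity
\[
(-u)^{\lambda_a({}^{u}b)} + u\circ v \;=\; u\circ\bigl((u^{-})^b + v\bigr)
\]
inside the weak brace $T$.

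The main obstacle is extracting this $T$-identity from the two seemingly unrelated hypotheses \eqref{eq:semibrace-Clifford} and \eqref{eq:semibrace-sigma-delta}. My strategy is to specialise the free variable in \eqref{eq:semibrace-sigma-delta} by taking $v = u^{-}\circ u$. Then $(u^{-}\circ u)^b = u^{-}\circ u$, because $u^{-}\circ u = -u^{-}+u^{-}$ is an idempotent of $(T,+)$ (recall that $(T,+)$ is Clifford by \cref{prop_+_Clifford} and that the additive and multiplicative idempotents coincide), so \eqref{eq:semibrace-Clifford} applies. The right-hand side $u\circ(u^{-}\circ u + w)$ collapses via \cref{prop_semi_inversi}-$4$ and \cref{le:prop-meno}-$2$ to $u\circ w$, giving the auxiliary identity $u^{\lambda_a({}^{u}b)} + u\circ\bigl((u^{-})^b + v\bigr) = u\circ v$.

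Adding $(-u)^{\lambda_a({}^{u}b)}$ on the left of both sides of this auxiliary identity and using \eqref{eq:semibrace-Clifford} once more to replace $(-u+u)^{\lambda_a({}^{u}b)}$ by the central idempotent $-u+u$, together with the Clifford structure of $(T,+)$ (which lets $-u+u$ be absorbed on either side of $u\circ w$), produces the required identity. Feeding this back yields condition (c), and \cref{prop_equivalenza} then upgrades the inverse semi-brace from (a) to a weak brace, proving the theorem.
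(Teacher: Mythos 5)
Your proposal is correct and follows essentially the same route as the paper's own proof: reduction to \cref{prop_equivalenza}, the specialisation $v=u^{-}\circ u$ in \eqref{eq:semibrace-sigma-delta} combined with \eqref{eq:semibrace-Clifford} to obtain the auxiliary identity, and the same componentwise comparison yielding $(-u)^{\lambda_a({}^{u}b)} + u\circ v = u\circ\left(\left(u^{-}\right)^b + v\right)$. No substantive differences to report.
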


\medskip

By \cite[Theorem 23]{CaMaSt21}, we have that the solution $r$ associated to a double semidirect product of $S$ and $T$ via $\sigma$ and $\delta$ is given by
\begin{align*}
        r\left(\left(a,u\right), \left(b,v\right)\right)
        =
        \left(\left(\lambda_a\left(^{u}{b}\right), u\circ\Omega_{u,v}^{b}\right),
        \left(^{\left(\Omega_{u,v}^{b}\right)^{-}u^{-}}{\rho_{^u b}\left(a\right)}, \, \left(\Omega_{u,v}^{b}\right)^{-}\circ v\right)\right),
    \end{align*}
    for all $\left(a,u\right), \left(b,v\right)\in S\times T$, with \, $\Omega_{u,v}^{a}
    := \left(u^{-}\right)^a+v$, for all $a\in S$, $u,v\in T$. 

\bigskip

%*****************************************************
\section{Weak braces obtained from factorizable inverse monoids}
%*****************************************************

In this section, we give a class of examples of weak braces. Specifically, we extend the construction provided in \cite[Example 1.6]{GuVe17} and \cite[Theorem 3.3]{SmVe18}, involving exactly factorizable inverse monoids. 
Finally, we show that any exactly factorizable group determines two skew braces which, in general, are not isomorphic. 
\medskip

\noindent A semigroup $(S, +)$ is said to be \emph{factorizable} if $S=U+V$, where $U$ and $V$ are subsemigroups of $S$ \cite{To69}. The pair $(U,V)$ is called a \emph{factorization} of $S$, with \emph{factors} $U$ and $V$. Moreover, a factorization $(U,V)$ of $S$ is called \emph{exact} (or \emph{univocal}) if any element $a\in S$ can be written in a unique way as $a = u_a + v_a$, with $u_a\in U$ and $v_a\in V$. In such a case, it holds that $U\cap V= \{0\}$, with $0$ both right identity of $U$ and left identity of $V$, cf. \cite[Theorem 2]{Ca87}. Clearly, if $U$ and $V$ are Clifford semigroups, then $0$ is the identity of $U$ and $V$ and, consequently, $(S, +)$ is a monoid.

\medskip

Let us observe that, in general, if $S$ is a monoid admitting an exact factorization into two Clifford submonoids, then $S$ is not necessarily a Clifford monoid.
However, below we illustrate how to obtain easy examples of Clifford semigroups admitting an exact factorization, involving the semidirect product of a group and a Clifford monoid. 
\begin{ex}
Let $S$ be a group, $T$ a Clifford monoid, and
$\sigma:T\to\Aut\left(S\right)$ a homomorphism. Then, $S\rtimes_{\sigma} T$ is a Clifford semigroup. Indeed, by \cite[Theorem 6]{Pr86}, it is an inverse semigroup. Moreover, it is easy to check that
$E\left(S\rtimes_{\sigma} T\right) = 
\{\left(0,e\right) \, | \, e\in E\left(T\right)\}$.
Hence, if $(a, u) \in S \times T$ and $e\in E\left(T\right)$,
\begin{align*}
    \left(a,u\right) + \left(0,e\right) 
    = \left(a,ue\right)
    = \left(a,eu\right)
    = \left(0,e\right) +\left(a,u\right), 
\end{align*}
i.e., $S\rtimes_{\sigma} T$ is a Clifford semigroup. Furthermore, the factorization of $S\rtimes_{\sigma} T$ is exact.       
Note that, if $T$ is an idempotent semigroup, then the semidirect product is direct.
\end{ex}

We refer to survey by FitzGerald \cite{Fi10} for methods that allow to construct factorizable inverse monoids.

\bigskip

The following theorem is the main result of this section which shows how to obtain a weak brace starting from an exact factorization of a Clifford semigroup.
\begin{theor}\label{prop_Cliff_semibrace}
  Let $(S,+)$ be a Clifford semigroup and $(U,V)$ an exact factorization of $S$ into two Clifford subsemigroups. Define on $S$ the operation given by
  \begin{align*}
      a\circ b:= u_a + b + v_a,
  \end{align*}
  for all $a,b\in S$. Then, $S_{\circ}:=\left(S,+,\circ\right)$ is a weak brace and its multiplicative semigroup is isomorphic to the additive semigroup $U\times V$.
  Moreover, $U$ and $V$ are the trivial and the almost trivial weak braces, respectively.
  \end{theor}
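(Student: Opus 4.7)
The plan is to build the isomorphism suggested in the statement and then read off all the weak brace data from it. I would first define $\phi\colon S \to U \times V$ by $\phi(a) := (u_a, v_a)$, where $a = u_a + v_a$ is the unique decomposition afforded by the exact factorization. A direct expansion gives
\[
a\circ b = u_a + b + v_a = (u_a + u_b) + (v_b + v_a),
\]
so $u_{a\circ b} = u_a + u_b \in U$ and $v_{a\circ b} = v_b + v_a \in V$; hence $\phi$ transports $\circ$ to the natural operation on $U \times V$ (with the second factor carrying its opposite addition). Since $(U,+)$ and $(V,+)$ are Clifford, so is the product, and I can read off that $(S,\circ)$ is an inverse semigroup whose inverse of $a$ is $a^- = -u_a + (-v_a)$, with $u_{a^-} = -u_a$ and $v_{a^-} = -v_a$.

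Writing $x^{\circ} := x + (-x) = (-x) + x$ for the idempotent attached to $x$ in a Clifford semigroup, I would then check the identity $a\circ a^- = -a + a$ by computing
\[
a\circ a^- = u_a + (-u_a) + (-v_a) + v_a = u_a^{\circ} + v_a^{\circ},
\]
while $-a + a = -v_a + (-u_a) + u_a + v_a = -v_a + u_a^{\circ} + v_a$, which collapses to $u_a^{\circ} + v_a^{\circ}$ once I move the central idempotent $u_a^{\circ}$ past $v_a$. For the distributive law $a\circ(b+c) = a\circ b - a + a\circ c$, I would expand the right-hand side to
\[
u_a + b + v_a + (-v_a) + (-u_a) + u_a + c + v_a,
\]
extract the two central idempotents $v_a^{\circ} = v_a + (-v_a)$ and $u_a^{\circ} = (-u_a) + u_a$, and absorb them using $u_a^{\circ} + u_a = u_a$ and $v_a + v_a^{\circ} = v_a$ (moving idempotents across via centrality) to recover the left-hand side $u_a + b + c + v_a$.

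The last two assertions follow by restriction: for $a,b \in U$ one has $u_a = a$ and $v_a = 0$, so $a\circ b = a + b$, i.e.\ the trivial weak brace on $U$; for $a,b\in V$, $u_a = 0$ and $v_a = a$, giving $a\circ b = b + a$, the almost trivial weak brace on $V$. The main obstacle throughout is the bookkeeping in $(S,+)$: since $U$ and $V$ need not commute elementwise, every step depends essentially on the centrality of idempotents in the Clifford semigroup $(S,+)$, and this is what allows the cross-terms to be absorbed cleanly.
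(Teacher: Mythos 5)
Your proof is correct and follows essentially the same route as the paper's: the paper verifies directly that $(S,\circ)$ is a Clifford semigroup with $a^- = -u_a - v_a$ and checks the two weak brace axioms by the same idempotent-centrality manipulations you describe, only presenting the isomorphism at the end rather than deriving the structure from it at the start. The one line worth adding is that your $\phi$ lands in $U\times V^{op}$ (opposite addition on the second factor), so to obtain the stated isomorphism with the additive semigroup $U\times V$ you should compose with $(u,v)\mapsto (u,-v)$ -- which is exactly what the paper's map $\eta(u,v)=u-v$ does.
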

\begin{proof}
  Initially, let us observe that $\left(S, \circ\right)$ trivially is a semigroup. 
  Moreover, $(S,\circ)$ is a completely regular semigroup. Indeed, if $a\in S$ and  $x:= -u_a - v_a$, we get
  \begin{align*}
      a\circ x\circ a= a\circ \left(-u_a + a -v_a\right)
      = u_a - u_a + a -v_a + v_a
      = u_a + v_a= a,
  \end{align*}
  and $a\circ x = u_a-u_a-v_a+v_a=-u_a+u_a+v_a-v_a=x\circ a$.
  Besides, $(S,\circ)$ is a Clifford semigroup. In fact, if $e\in E(S,\circ)$ and $a\in S$, then we have $u_e,v_e\in E(S,+)$ and $a\circ e= u_a+u_e+v_e+v_a=u_e+u_a+v_a+u_e=e\circ a$. 
  In addition, it holds $x=a^-$, since 
  \begin{align*}
      x\circ a\circ x
      = x\circ \left(u_a + x +v_a\right)
      = -u_a + u_a + x +v_a - v_a
      = -u_a - v_a = x.
  \end{align*}
  Now, if $a,b,c \in S$, then we obtain
  \begin{align*}
      a\circ b - a +a\circ c 
      &=u_a+b+v_a -v_a-u_a+u_a+c+v_a\\
      &= u_a-u_a+u_a+b+c+v_a-v_a+v_a\\
      &= a\circ \left(b+c\right)
    \end{align*}
  and  $
      a\circ a^- = u_a-u_a - v_a +v_a = -u_a + u_a -v_a + v_a= -a+a$. Therefore, $S_{\circ}$ is a weak brace.\\ 
    Moreover, it is easy to show that the map $\eta:U\times V\rightarrow S$ given by $\eta(u,v)=u-v$, for all $(u, v) \in U \times V$, is an isomorphism from the additive semigroup $U \times V$ into the multiplicative semigroup $(S, \circ)$.
    Finally, the last part of the statement is trivial.
  \end{proof}
  
\medskip

Let us observe that weak braces $S_{\circ}$ obtained as in the previous theorem are also generalized semi-braces.  
\medskip

In the particular case of factorizable groups, we recover the results given in \cite[Example 1.6]{GuVe17} and \cite[Theorem 3.3]{SmVe18}.
\begin{cor}\label{prop_SmVe18}
Let $(G,+)$ be a group and $(U,V)$ an exact factorization of $G$ into two subgroups. Then, $G_{\circ}$ is a skew brace and its multiplicative group is isomorphic to the additive group $U\times V$. Moreover, $U$ and $V$ are the trivial and the almost trivial skew braces, respectively.
\end{cor}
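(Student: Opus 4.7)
The plan is to deduce this corollary directly from \cref{prop_Cliff_semibrace} by specialising to the group setting. Since any group is in particular a Clifford semigroup (having a single idempotent, which is automatically central) and its subgroups are Clifford subsemigroups, the hypotheses of \cref{prop_Cliff_semibrace} are satisfied. Therefore $G_\circ = (G,+,\circ)$, with $a\circ b := u_a + b + v_a$, is a weak brace whose multiplicative semigroup is isomorphic to the direct product $U\times V$ via the map $\eta(u,v) = u - v$.

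The next step is to upgrade ``weak brace'' to ``skew brace''. For this I would invoke the fact that the additive and multiplicative semigroups of any weak brace share the same set of idempotents (as remarked just before \cref{idempotent}). Since $(G,+)$ is a group, $E(G,+) = \{0\}$, hence $E(G,\circ) = \{0\}$ as well, and an inverse semigroup with a single idempotent is a group. Thus $(G,\circ)$ is a group, and the defining identity $a\circ (b+c) = a\circ b - a + a\circ c$ (the first identity in \cref{def:skew-inverse}, which holds for every weak brace) is exactly the skew brace axiom; the second identity $a\circ a^- = -a + a$ collapses to $0 = 0$ and carries no information in this setting. The isomorphism of multiplicative groups with $U\times V$ is then immediate from the corresponding statement of \cref{prop_Cliff_semibrace}.

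For the final assertion about the induced structures on $U$ and $V$, I would compute $\circ$ restricted to each factor using the exactness of the factorization. If $u\in U$ then its unique factorization is $u = u + 0$, so $u_u = u$ and $v_u = 0$; hence for $u, u' \in U$ one has $u\circ u' = u + u' + 0 = u + u'$, which is the trivial skew brace operation on $U$. Dually, for $v\in V$ one has $u_v = 0$ and $v_v = v$, so $v\circ v' = 0 + v' + v = v' + v$, the almost trivial operation. No real obstacle is anticipated: the entire argument is a specialisation of the preceding theorem together with the elementary observation that a weak brace with group additive part is a skew brace.
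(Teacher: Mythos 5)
Your proposal is correct and follows essentially the same route as the paper, which presents this corollary as an immediate specialization of \cref{prop_Cliff_semibrace} to the case where the Clifford semigroup is a group. Your explicit justification that $(G,\circ)$ is a group (via the coincidence of idempotent sets noted before \cref{idempotent}) and your direct computation of $\circ$ on $U$ and $V$ correctly fill in the details the paper leaves to the reader.
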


\smallskip

\begin{rem}
Regarding any element $a=u_a+v_a$ of $G$ as the pair $\left(u_a, v_a\right)$ in $U\times V$, the construction in \cref{prop_SmVe18} is an instance of the double semidirect product of the two skew braces $U$ and $V$ presented in \cref{th_double}, where $\sigma_v = \id_U$, for every $v\in V$, or, equivalently, $(U,+)\unlhd (G,+)$.
Indeed, in such a case,  if $a,b\in U$ and $t, v, w \in V,$ clearly
\begin{align*}
    \left(a,t\right)+\left(b,v\right)
    =\left(a+b, \, t^b + v\right)\qquad
	\left(a,t\right)\circ \left(b,v\right)
	= \left(a + b,v + t\right)
	=\left(a \circ b, \, t\circ v\right),
\end{align*}
and \eqref{eq:semibrace-Clifford} is trivial. Moreover, $\lambda_{a}\left(b\right) = b$ and
\begin{align*}
    \left(t\circ v\right)^{\lambda_{a}\left(b\right) } + t\circ\left(\left(t^-\right)^{b} + w\right)
    = \left(v + t\right)^{b} 
     -t^{b} + w + t 
     = v^b + w + 
    t\circ\left(v^b + w\right), 
\end{align*}
i.e., \eqref{eq:semibrace-sigma-delta} holds.
\end{rem}

\medskip

\begin{rem}
We recall that a skew (left) brace $(S, +, \circ)$ is a \emph{bi-skew (left) brace} if in addition the roles of the sum and the multiplication can be reversed, i.e.,
\begin{align*}
    	a+ (b\circ c)= (a+ b) \circ a^{-} \circ (a+c)
\end{align*}
holds, for all $a,b,c \in S$, cf. \cite{Ch19}. As a direct consequence of \cite[Proposition 7.1]{Ch19}, if $(G,+)$ is a group and $(U,V)$ is an exact factorization of $G$ into two subgroups, then $G_{\circ}$ is a bi-skew brace.
\end{rem}

\medskip

Now, given a group $\left(G, +\right)$ with an exact factorization $(U,V)$, we wonder what happens if we consider the other factorization $(V,U)$ of $G$. Thus, any element $a \in G$ can be also written as $a=v_a'+u_a'$. In a similar way, by exchanging the roles of $U$ and $V$ in \cref{prop_SmVe18}, one can prove the following result.

\begin{prop}\label{prop:bullet}
Let $(G,+)$ be a group and $(V,U)$ an exact factorization of $G$ into two subgroups. Define on $G$ the following operation
\begin{equation*}
	a\bullet b = v_a'+b+u_a',
\end{equation*}
for all $a,b \in G$. Then, $G_{\bullet}:=(G,+,\bullet)$ is a skew brace and its multiplicative group is isomorphic to $V\times U$. Moreover, $V$ and $U$ are the trivial and the almost trivial skew braces, respectively.
\end{prop}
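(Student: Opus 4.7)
The plan is to derive \cref{prop:bullet} from \cref{prop_SmVe18} by a cosmetic relabeling of the two factors, as the sentence preceding the statement already suggests. I would begin by setting $U' := V$ and $V' := U$, so that $(U',V')$ is an exact factorization of the group $G$ into two subgroups in the precise sense required by \cref{prop_SmVe18}; for each $a \in G$ the unique decomposition $a = u'_a + v'_a$ with $u'_a \in U'$ and $v'_a \in V'$ is then exactly the decomposition $a = v'_a + u'_a$ with $v'_a \in V$ and $u'_a \in U$ appearing in the statement of the proposition.

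Next I would invoke \cref{prop_SmVe18} applied to $(U',V')$ to obtain a skew brace $(G,+,\circ)$ whose multiplication is $a \circ b = u'_a + b + v'_a$. Under the primed/unprimed identification this reads $v'_a + b + u'_a = a \bullet b$, so $(G,+,\bullet)$ coincides with $G_{\circ}$ as a skew brace, and the three remaining assertions of \cref{prop:bullet} transfer directly: the multiplicative group of $G_{\bullet}$ is isomorphic to the additive group $U' \times V' = V \times U$, while the factors $U' = V$ and $V' = U$ inherit respectively the trivial and the almost trivial skew brace structures identified in \cref{prop_SmVe18}.

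The main thing worth checking -- and the closest thing to an obstacle, though I do not anticipate any genuine difficulty -- is that the construction underlying \cref{prop_SmVe18} (namely \cref{prop_Cliff_semibrace}) applies to an arbitrary ordered exact factorization of $G$ into two subgroups and produces the operation $u_a + b + v_a$ using precisely that ordering, so that renaming the first factor to $V$ and the second to $U$ simply swaps the left and right summands around $b$ in the formula. This is immediate from the definition, so the proof reduces to the bookkeeping above and no further calculation is needed.
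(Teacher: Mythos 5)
Your argument is correct and is precisely the paper's own approach: the paper offers no separate proof of \cref{prop:bullet}, stating only that it follows ``by exchanging the roles of $U$ and $V$ in \cref{prop_SmVe18},'' and your relabeling $U':=V$, $V':=U$ spells out exactly that step, correctly noting that \cref{prop_Cliff_semibrace} applies to an arbitrary ordered exact factorization. No gap.
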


\smallskip

As in the case of $G_{\circ}$, one can check the following result.
\begin{prop}
 Let $(G,+)$ be a group and $(V,U)$ an exact factorization of $G$ into two subgroups, with $V$ a normal subgroup of $G$. Then, $G_{\bullet}$ is a bi-skew brace. 
\end{prop}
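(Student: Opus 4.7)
The plan is to verify directly the bi-skew brace identity
\begin{align*}
a + (b\bullet c) = (a+b) \bullet a^{-} \bullet (a+c),
\end{align*}
where $a^{-}$ denotes the inverse of $a$ in $(G, \bullet)$. The argument is parallel to the preceding remark for $G_{\circ}$, with the roles of $U$ and $V$ interchanged; equivalently, one may invoke \cite[Proposition 7.1]{Ch19} applied to the exact factorization $(V, U)$ with $V\unlhd G$. I outline a direct verification below.

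The first step is to identify $a^{-}$. From the unique $(V,U)$-factorization $0 = 0+0$ one sees that $v'_0 = u'_0 = 0$, so the additive zero $0$ is the identity of $(G, \bullet)$; solving $a\bullet a^{-} = v_a' + a^{-} + u_a' = 0$ then gives $a^{-} = -v_a' - u_a'$, whose $(V, U)$-factorization reads $v_{a^{-}}' = -v_a'$ and $u_{a^{-}}' = -u_a'$. The second step uses the normality of $V$ to factor $a+b$: since $u_a' + v_b' - u_a' \in V$, one rewrites
\begin{align*}
a+b = v_a' + u_a' + v_b' + u_b' = \bigl(v_a' + u_a' + v_b' - u_a'\bigr) + \bigl(u_a' + u_b'\bigr),
\end{align*}
so that $v_{a+b}' = v_a' + u_a' + v_b' - u_a'$ and $u_{a+b}' = u_a' + u_b'$. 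This is precisely where the hypothesis $V\unlhd G$ enters essentially.

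The final step is to expand both sides of the identity. The left-hand side is $a + v_b' + c + u_b'$. For the right-hand side, using $-v_a' + a = u_a'$ one computes $a^{-}\bullet(a+c) = -v_a' + a + c - u_a' = u_a' + c - u_a'$, and hence
\begin{align*}
(a+b)\bullet a^{-}\bullet(a+c) &= v_{a+b}' + (u_a' + c - u_a') + u_{a+b}' \\
&= v_a' + u_a' + v_b' - u_a' + u_a' + c - u_a' + u_a' + u_b' \\
&= v_a' + u_a' + v_b' + c + u_b' = a + v_b' + c + u_b',
\end{align*}
matching the left-hand side. The main subtlety is the telescoping cancellation in this last display; once the refactorization from the normality of $V$ is secured, the adjacent pairs $-u_a' + u_a'$ cancel and the identity drops out.
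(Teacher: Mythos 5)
Your proof is correct and follows the route the paper intends: the paper gives no written proof, merely asserting that "as in the case of $G_{\circ}$, one can check the following result" (the $G_{\circ}$ case being attributed to \cite[Proposition 7.1]{Ch19}), and your direct verification of the identity $a+(b\bullet c)=(a+b)\bullet a^{-}\bullet(a+c)$ is exactly the routine check being alluded to. The key points — identifying $a^{-}=-v_a'-u_a'$, and using $V\unlhd G$ to obtain the $(V,U)$-factorization of $a+b$ before the telescoping cancellation — are all handled correctly.
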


\medskip

\noindent In general, we point out that the skew braces $G_{\circ}$ and $G_{\bullet}$ are not isomorphic. 
 Note that, if two skew braces are not isomorphic, the associated solutions can be equivalent. For instance, it is enough to consider the trivial left braces on two abelian not-isomorphic groups having the same order, since the solutions associated coincide with the twist map. Thus, we wonder under which assumptions the skew braces $G_{\circ}$ and $G_{\bullet}$ give rise to non-equivalent solutions.
 
\medskip

The next is an easy example in which the solutions $r_{\circ}$ and $r_{\bullet}$ associated to $G_{\circ}$ and $G_{\bullet}$, respectively, are not equivalent since $G_{\circ}$ is a trivial skew brace and $G_{\bullet}$ is an almost trivial skew brace.
\begin{ex}
Let us consider a group $G$ which is the direct product $G = U+V$ with a non-abelian group $U$ and an abelian group $V$.
Clearly, we have that $u + v = v + u$, for all $u\in U$ and $v\in V$. Then, $G_{\circ}$ is the trivial skew brace, since
\begin{align*}
    a\circ b 
    = u_a + u_b + v_b + v_a
    = u_a + u_b + v_a + v_b
    = u_a + v_a + u_b + v_b
    = a + b,
\end{align*}
for all $a,b\in G$. On the other hand, $G_{\bullet}$ is an almost trivial skew brace. Indeed,
\begin{align*}
    a\bullet b 
    = v_a + v_b + u_b + u_a
    = u_b + u_a + v_a + v_b
    = u_b + a + v_b = b\circ a = b + a,
\end{align*}
for all $a,b\in G$.
Obviously, these skew braces are not isomorphic and the solutions associated to $G_{\circ}$ and $G_{\bullet}$ are not equivalent. In this case, such solutions are 
\begin{align*}
    r_{\circ}\left(a,b\right)
    = \left(b, \ -u_b + a + u_b\right)
    \qquad r_{\bullet}\left(a,b\right)
    =\left(-u_a + b + u_a, \ a\right),
\end{align*}
for all $a,b\in G$, respectively. Moreover, it holds $r_{\circ} = \tau \, r_{\bullet} \,\tau$, where $\tau$ is the twist map.
\end{ex}

\medskip

Now, we focus on exact factorizations having abelian factors. Initially, we observe that if $G=U+V$ is an exact factorization with $U$ and $V$ abelian subgroups of $(G,+)$, then $\left(G,\circ\right)$ is an abelian group, indeed
\begin{align*}
    a\circ b &= u_a + u_b + v_b + v_a
    = u_b + u_a + v_a + v_b\
    = b\circ a,
\end{align*}
for all $a,b \in G$. Similarly, $\left(G,\bullet\right)$ is an abelian group.
\medskip

Recently, a complete classification of skew braces of order $pq$ with $p$ and $q$ primes, up to isomorphism, has been provided in \cite[Theorem p.2]{AcBo20}.
In particular, if $p > q$ and $q\mid p-1$ there are $2q+2$ skew braces, otherwise there is only the trivial one. As far as we have observed, starting from an exact factorization $G=U+V$ different from a direct sum, with $|U|=p$ and $|V|=q$, we obtain that $(G,\circ)$ and $(G,\bullet)$ are abelian groups. Consequently, the skew braces $G_{\circ}$ and $G_{\bullet}$ are among those described in the cases $(ii)$ or $(iii)$ in \cite[Theorem p.2]{AcBo20} with the additive group isomorphic to $\mathbb{Z}_p\rtimes\mathbb{Z}_q$. 
Indeed, by elementary computations, one can check that $(ii)$ and $(iii)$ are the unique skew braces in the classification having abelian multiplicative groups.

\smallskip
Below, we highlight that even in the smallest case, $G_{\circ}$ and  $G_{\bullet}$ are not isomorphic and the solutions  $r_{\circ}$ and  $r_{\bullet}$ are not equivalent.

\begin{ex}\label{ex:S3}
Let us consider the symmetric group $G=\Sym_3$ exactly factorized as $U+V$, where $U= \seq{\left(1 2\right)}$ and $V= \seq{\left(1 2 3\right)}$. Then, $G_{\circ}$ is a skew brace (case $(ii)$), while $G_{\bullet}$ is a bi-skew brace (case $(iii)$) having both the multiplicative groups isomorphic to the cyclic group of order $6$. Specifically, $(G, \circ ) = \seq{(13)} = \seq{(23)}$ and $(G, \bullet) = \seq{(13)} = \seq{(23)}$.
In addition, the solutions $r_{\circ}$ and $r_{\bullet}$ are described by 
\begin{align*}
&\lambda^{\circ}_{\id_{3}} =
\lambda^{\circ}_{\left(12\right)}
= \id_{\Sym_3}\\
&\lambda^{\circ}_{\left(23\right)}
= \left(\left(12\right)\left(13\right)\left(23\right)\right)\\
&\lambda^{\circ}_{\left(123\right)}
= \lambda^{\circ}_{\left(132\right)}
= \lambda^{\circ}_{\left(13\right)}
= \left(\left(12\right)\left(23\right)\left(13\right)\right)\\
&\rho^{\circ}_{\id_3}=\rho^{\circ}_{\left(123\right)}
= \rho^{\circ}_{\left(132\right)}
= \id_{\Sym_3}\\
&\rho^{\circ}_{\left(12\right)}
= \rho^{\circ}_{\left(13\right)}
= \rho^{\circ}_{\left(23\right)}
= \left(\left(13\right)\left(23\right)\right)
\left(\left(123\right)\left(132\right)\right)
\end{align*}
and
\begin{align*}
&\lambda^{\bullet}_{\id_{3}}
= \lambda^{\bullet}_{\left(123\right)}
= \lambda^{\bullet}_{\left(132\right)}
= \id_{\Sym_3}\\
&\lambda^{\bullet}_{\left(12\right)}
= \lambda^{\bullet}_{\left(13\right)}
= \lambda^{\bullet}_{\left(23\right)}
= \left(\left(13\right)\left(23\right)\right)
\left(\left(123\right)\left(132\right)\right)\\
& \rho^{\bullet}_{\id_{3}}
= \rho^{\bullet}_{\left(12\right)} = \id_{\Sym_3}\\
&\rho^{\bullet}_{\left(23\right)}
=\rho^{\bullet}_{\left(123\right)}
= \left(\left(12\right)\left(13\right)\left(23\right)\right)\\
&\rho^{\bullet}_{\left(13\right)}
=\rho^{\bullet}_{\left(132\right)}
= \left(\left(12\right)\left(23\right)\left(13\right)\right).
\end{align*}
These solutions are not equivalent. Indeed, by a routine computation one can check that the unique bijection $f$ from $G$ into itself satisfying $(f \times f)r_{\circ} = r_{\bullet}(f \times f)$ would be the identity on $G$, an absurd.
\end{ex}
    
\bigskip

\bibliography{bibliography}

\end{document}